\documentclass[11pt,reqno]{amsart}
\usepackage[T1]{fontenc}
\usepackage{lmodern,amsmath,amssymb,amsthm,mathtools}
\usepackage[margin=1in]{geometry}
\usepackage{microtype,graphicx,tikz,xcolor}
\usepackage[hidelinks]{hyperref}
\hypersetup{pdftitle={Long-range expanders: construction and cutoff},pdfauthor={Dylan J. Altschuler}}
\usepackage[font=small,labelfont=bf]{caption}
\newtheorem{theorem}{Theorem}[section]
\newtheorem{proposition}[theorem]{Proposition}
\newtheorem{lemma}[theorem]{Lemma}
\newtheorem{corollary}[theorem]{Corollary}
\newtheorem{conjecture}[theorem]{Conjecture}
\theoremstyle{definition}
\newtheorem{remark}[theorem]{Remark}
\newtheorem{definition}[theorem]{Definition}

\newcommand{\TV}{\mathrm{TV}}
\newcommand{\dist}{\operatorname{dist}}

\newcommand{\mix}{\mathrm{mix}}

\title{Long-range expanders: construction and cutoff}
\author{Dylan J. Altschuler}

\address{Dylan J. Altschuler, Department of Mathematics, University of Texas at Austin.}
\email{dylan.altschuler@austin.utexas.edu}

\date{}
\begin{document}
\begin{abstract}
Long-range expansion is a tree-like volume growth condition on bounded-degree regular graphs, introduced by Dodos, Tikhomirov, Tyros, and the author to prove quantitative nonlinear Poincar\'e inequalities and non-embedding theorems. Motivated by fundamental questions in geometric group theory and nonlinear functional analysis about superexpanders of logarithmic girth, our first main result is that every
Ramanujan graph has long-range expansion. This gives explicit long-range expanders with logarithmic girth (such as Lubotzky--Phillips--Sarnak graphs) and establishes a strict hierarchy: Ramanujan expansion implies long-range expansion, which implies spectral expansion, with neither converse holding.

We subsequently compare the different notions of expansion from the perspective of dynamics. Our second main result establishes cutoff with an explicit Gaussian limit profile for the
random walk on every fixed-degree long-range expander sequence. In particular, the cutoff location and profile coincide with those for Ramanujan
graphs. This offers intermediate progress between cutoff for Ramanujan graphs---proven by Lubetzky and Peres---and the long-standing conjecture of cutoff for transitive spectral expanders. 
\end{abstract}
\maketitle
\section{Introduction}

A combinatorial notion of expansion for graphs, called \emph{long-range}
expansion (LRE), was introduced to obtain quantitative results in metric geometry and nonlinear functional analysis \cite{ADTT,ADTTUniversal}. The known constructions used random regular graphs, and explicit constructions were left open. 
We prove that the Ramanujan condition implies long-range expansion, which in turn implies spectral expansion, and that both implications are strict. In particular, classical Ramanujan constructions give long-range expanders with logarithmic girth. We then study random walks on these graphs and prove that long-range expansion already gives the cutoff location and Gaussian profile known for Ramanujan graphs.

We now formalize the three notions of expansion. Throughout, $G=(V,E)$ is a finite, simple, connected $d$-regular graph,
$n=|V|$, $d\ge3$, and $q=d-1$. Let $A$ denote the adjacency matrix of $G$, and denote by $P$ the rescaling $P := A/d$. Then,
\[
 1=\lambda_1(P)>\lambda_2(P)\ge\cdots\ge\lambda_n(P)\ge-1.
\]
For $S\subseteq V$, define
$B(S,r)=\{y:\dist(y,S)\le r\}$, and denote the ball by $B(x,r)=B(\{x\},r)$. Logarithms are natural unless specified. Degrees and expansion parameters are fixed as $n$ diverges.

\begin{definition}[Spectral expansion]\label{def:spectral}
For $\gamma>0$, the graph $G$ is a $\gamma$-\emph{expander}
if
\[
 1-\lambda_2(P)\ge\gamma,
\]
and an \emph{absolute} $\gamma$-\emph{expander} if
\[
 \max_{2\le i\le n}|\lambda_i(P)|\le1-\gamma.
\]
A \textit{spectral expander} sequence is a sequence of graphs $(G_n)_n$, where each $G_n$ has $n$ vertices and is a $\gamma$-expander for some constant $\gamma > 0$ that is independent of $n$. An absolute expander sequence is defined analogously.
\end{definition}

Ramanujan graphs are spectral expanders whose nontrivial adjacency eigenvalues lie in the spectral interval of the infinite $d$-regular tree.

\begin{definition}[Ramanujan expansion]
A graph is said to be a \emph{Ramanujan expander}, or simply
\emph{Ramanujan}, if
\[
 |\lambda_i(P)|\le\frac{2\sqrt{d-1}}d
 \qquad\text{whenever }\lambda_i(P)\notin\{-1,1\}.
\]
\end{definition}

The exceptional eigenvalue $-1$ allows for bipartite Ramanujan graphs.
Finally, we introduce the notion of combinatorial expansion that is the main focus of the article:

\begin{definition}[Long-range expansion]\label{def:lre}
For $c\in(0,1]$, the graph $G$ is a $c$-\emph{long-range expander} if, for every $S\subseteq V$ and every integer $r\ge0$,
\begin{equation}\label{eq:lre}
 |B(S,r)|\ge\min\left\{\frac{3n}{4},c|S|q^r\right\}\,.
\end{equation}
A sequence is a \emph{long-range expander sequence} if the same $c>0$
works for every graph in the sequence.
\end{definition}

Here, the choice of $3/4$ is inessential: any fixed constant in $(1/2,1)$ gives an equivalent condition, after changing $c$.

\begin{remark}
    Definition \ref{def:lre} slightly diverges from the formulation of LRE in \cite{ADTT}. The current presentation is substantially simpler, and the appendix proves the equivalence.
\end{remark}

The long-range expansion condition asks for growth at the branching rate $q=d-1$ of the regular tree, up to a constant factor, until three quarters of the graph is reached. The growth rate is maximal by degree considerations. However, LRE is quite different from the related ``locally tree-like'' properties considered in the literature. For example, Lubetzky--Sly \cite[Lemma~2.1]{LS} show that every ball of radius $\lfloor\frac15\log_q n\rfloor$ in a random regular graph contains at most one cycle with high probability, while Bordenave's tangle-free condition in his proof of Friedman's theorem \cite{Friedman,Bordenave} gives the same conclusion at radius $\lfloor\kappa\log_q n\rfloor$ for any fixed $\kappa<1/4$. These conditions only constrain balls of radius a small multiple of $\log_q n$, and hence sets of size $n^{c}$ for $c < 1$. In contrast, LRE asserts control of the geometry at \textit{all} radii up to the diameter of the graph and for any size set. But, the asserted control at each radius is weaker: LRE allows for many short cycles and tracks only crude volume growth rather than paths.

Finally, while the relation between Ramanujan and long-range expansion was previously unknown, condition \eqref{eq:lre} readily implies spectral expansion. Indeed, pick the least integer $r$ with $cq^r\ge2$ and take any set $|S|\le n/2$. Then \eqref{eq:lre} gives $|B(S,r)|\ge3|S|/2$. Since $|B(S,r)\setminus S|\le |E(S,S^c)|\sum_{j=0}^{r-1}q^j$, Cheeger's inequality \cite{LPW} supplies a uniform spectral gap.

\subsection{Explicit constructions}
The search for explicit constructions of long-range expanders is motivated by long-standing questions at the intersection of metric geometry, geometric group theory, and functional analysis about impossibility of low-distortion and coarse embeddings. The connection, quoted here in an abridged formulation, is:

\begin{theorem}[Nonlinear spectral gaps from LRE \cite{ADTT}]\label{thm:nonlinear-background}
Suppose that $(G_n)$ is a $d$-regular $c$-long-range expander sequence with $d \ge 3$ fixed.
Fix $K,M\ge1$. Let $X$ be a finite-dimensional normed space with a
$K$-unconditional basis and cotype $s\ge2$ with constant at most $M$.
There are constants $C,C'>0$ depending only on $(d,c,K,M)$ such that every $f:V(G_n)\to X$ satisfies
\begin{equation}\label{eq:nonlinear-poincare}
 \frac1{n^2}\sum_{u,v\in V(G_n)}\|f(u)-f(v)\|_X
 \le\frac{C\, s^{10}}{|E(G_n)|}
       \sum_{\{u,v\}\in E(G_n)}\|f(u)-f(v)\|_X.
\end{equation}
In particular, the distortion of any embedding of $G_n$ into $X$ is
at least $C'\, s^{-10}\log n$.
\end{theorem}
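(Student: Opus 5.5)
The plan is to reduce the nonlinear Poincar\'e inequality \eqref{eq:nonlinear-poincare} to two ingredients. The first is a metric (path-counting) Poincar\'e inequality that holds on any long-range expander. The second is a Banach-space estimate that controls how averages along long random paths behave in a space with an unconditional basis and finite cotype. The structural engine is Definition~\ref{def:lre}. For any set $S$ and radius $r\le r_*:=\log_q(3n/(4c|S|))$, the $r$-neighbourhood of $S$ grows at the tree rate. As a consequence, for a uniformly random vertex $u$ and a non-backtracking walk $(u=w_0,w_1,\dots,w_R)$ of length $R\asymp\log_q n$, the endpoint $w_R$ is spread over a constant fraction of $V$, up to a multiplicative constant depending on $(d,c)$. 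I would first prove this as a combinatorial lemma. The idea is to apply \eqref{eq:lre} to the level sets of the walk's distribution and use a double-counting argument, bounding collisions by $q^{-r}$ on average. The conclusion is that the walk measure $\mu_R^u$ satisfies $\frac1n\sum_u \mu_R^u(v)\le C/n$ and $\mu_R^u(v)\lesssim 1/n$ for most pairs.

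Given this mixing lemma, the left side of \eqref{eq:nonlinear-poincare} is at most a constant times $\frac1n\sum_u \mathbb E\|f(w_0)-f(w_R)\|_X$. The triangle inequality along the path alone gives the trivial bound $R\cdot\frac1{|E|}\sum\|f(u)-f(v)\|$, which loses exactly the $\log n$ factor that is the content of the theorem. The key step is therefore to beat the triangle inequality. Write $f(w_R)-f(w_0)=\sum_{j}\Delta_j$ with $\Delta_j=f(w_j)-f(w_{j-1})$. Along a non-backtracking walk on a graph with tree-like growth, the increments $\Delta_j$ behave like a martingale difference sequence up to an error, because the branching at each step is essentially fresh. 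I would apply a martingale-cotype or Pisier-type inequality adapted to $K$-unconditional bases. Coordinatewise, unconditionality lets one use lattice square functions: $\|\sum\Delta_j\|\lesssim_K \|(\sum|\Delta_j|^2)^{1/2}\|$. Cotype $s$ with constant $M$ then converts the square function into an $\ell_s$-sum, giving $\mathbb E\|f(w_R)-f(w_0)\|\lesssim s^{O(1)}R^{1/s}\cdot(\text{edge average})$. The remaining $R^{1/s}$ loss is removed by a multiscale iteration over scales $R_k=q^k$. At each scale the LRE lemma resets the distribution, which produces a geometric series. The polynomial $s^{10}$ comes from bookkeeping the constants in the square-function and Maurey--Pisier steps.

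The distortion consequence is then standard. The left side of \eqref{eq:nonlinear-poincare} evaluated on the graph metric is at least of order $\log_q n$, since by degree counting most pairs are at distance $\gtrsim\log_q n$. The right side is $O(1)$ for a $1$-Lipschitz map. Rescaling any bi-Lipschitz embedding then gives distortion $\ge C' s^{-10}\log n$.

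I expect the main obstacle to be the martingale approximation step. Long-range expansion allows many short cycles, so the increments are not genuinely independent. The LRE condition only controls crude volumes, not path multiplicities. What I would try first is a coupling. At each scale, replace the walk on $G_n$ by its lift to the universal cover $T_d$, where the martingale structure is exact. Then use \eqref{eq:lre} to show that the projection $T_d\to G_n$ is at most $C(d,c)$-to-one on average over balls of radius $\le r_*$. This is exactly what the ``$c|S|q^r$'' lower bound yields by duality. Losses from this comparison are multiplicative constants, so they do not affect the $\log n$ gain.
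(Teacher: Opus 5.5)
\paragraph{Context.} The paper does not prove this statement itself. It quotes it from \cite{ADTT}, where it is proved for graphs satisfying \eqref{eq:lre} (property~(A)) together with an incidence condition, property~(B), which was chosen specifically for this application. The paper's only contribution is that \eqref{eq:lre} alone suffices: it gives a uniform spectral gap (the Cheeger computation in the introduction), and a spectral gap gives~(B) by Lemma~\ref{lem:property-B}.

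\paragraph{Where your route fails.} Your self-contained route breaks at its central step, for two separate reasons. First, the increments $\Delta_j=f(w_j)-f(w_{j-1})$ are not approximately martingale differences. The conditional mean $\mathbb E[\Delta_j\mid w_0,\dots,w_{j-1}]$ is the average of $f$ over the $q$ admissible continuations of $w_{j-1}$ minus $f(w_{j-1})$, and this is arbitrary because $f$ is. Tree-like branching does not change this. Neither does lifting to $T_d$, since $f\circ p$ is no more harmonic on the tree than $f$ is on $G_n$. Second, even for genuine martingale differences, the Banach-space step points the wrong way. In a lattice of cotype $s$ one has $(\sum_j\|x_j\|^s)^{1/s}\lesssim\|(\sum_j|x_j|^2)^{1/2}\|$, which is a \emph{lower} bound on the square function. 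Upper bounds on $\|\sum_j\Delta_j\|$ need type or smoothness, which the hypotheses do not give. Concretely, $X=\ell_1^N$ satisfies the hypotheses with $K=1$, $s=2$ and an absolute cotype constant. The martingale differences $d_j=\varepsilon_je_j$ give $\|\sum_{j\le R}d_j\|_1=\|(\sum_j|d_j|^2)^{1/2}\|_1=R=\sum_j\|d_j\|_1$. So the claimed $R^{1/s}$ bound is false, and only the triangle inequality survives.

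\paragraph{The deeper problem.} An estimate $\mathbb E\|f(w_R)-f(w_0)\|\lesssim g(R)\cdot(\text{edge average})$ with $g$ growing is a Markov-type statement. Even with $g(R)=R^{1/s}$ it would give at best distortion of order $(\log n)^{1-1/s}$. Since $w_0$ and $w_R$ are both uniform, \eqref{eq:nonlinear-poincare} forces such a bound with $g$ bounded uniformly in $R$. So the entire content of the theorem sits in your ``multiscale iteration in which LRE resets the distribution.'' You give no mechanism there that converts a lower bound on volumes into cancellation of $X$-valued differences.

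\paragraph{The mixing lemma.} The proposed mixing lemma is also unsupported, and its conclusion points the wrong way.
\begin{itemize}
\item Condition \eqref{eq:lre} controls how many endpoints are reached, not how unevenly paths are spread over them. Compare the example $\frac12\delta_x+\frac12\pi$ after Proposition~\ref{prop:entropy}, whose proof extracts only an entropy deficit of $C\log\log n$, not density or collision bounds of order $1/n$.
\item To dominate the pair average by the path average you need $\mu_R^u\gtrsim\pi$, not $\mu_R^u\lesssim\pi$.
\end{itemize}
That reduction is repairable with the lazy simple random walk and the spectral gap implied by \eqref{eq:lre}. Your distortion deduction is also fine once \eqref{eq:nonlinear-poincare} is known. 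But the core estimate needs a genuinely different idea, such as the argument of \cite{ADTT} built on properties~(A) and~(B).
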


Inequalities of the form \eqref{eq:nonlinear-poincare} are referred to as ``nonlinear Poincar\'e inequalities'' and are one of the few tools for proving non-embedding theorems. Long-range expander graphs offer examples of metric spaces with strong, quantitative non-embedding properties. Earlier methods for general
spectral expanders \cite{Oz04,OS94,Na14} in general only give distortion bounds that decay super-exponentially in $s$. It remains a central question in functional analysis, posed by Pisier and Mendel--Naor \cite{Pi10,MN}, to remove the unconditionality
assumption that existing results rely on:
\begin{center}
\em Does every spectral expander sequence satisfy a nonlinear
Poincar\'e inequality, uniformly in the graph size, for each Banach
space of finite cotype? Does even one such sequence exist?
\end{center}

A closely related direction studies \textit{superexpanders}: sequences of graphs that satisfy nonlinear Poincar\'e inequalities for every uniformly convex\footnote{Uniform convexity implies finite cotype; the converse fails.} Banach space. Their existence, asked for by Kasparov and Yu \cite{KY06}, was established by Lafforgue \cite{La08}, with later constructions by Mendel and Naor \cite{MN}. Motivated by connections to the Novikov conjecture \cite{KY06,MN}, it remains an important open question to determine the existence of superexpanders with logarithmic girth \cite[Section~1.1]{MN}.

Together, these research directions suggest the construction of long-range expanders with logarithmic
girth as an important goal, as this would supply explicit high-girth graphs to which the quantitative theorem above applies. Previously, the known constructions of LRE sequences used random regular graphs, whose girth does not tend to infinity in probability. 

\begin{proposition}[\cite{ADTT}; see also Appendix \ref{sec:property-B} below]
For each fixed $d\ge3$, a uniformly random $d$-regular graph
on $n$ vertices is a $c_d$-long-range expander with probability
$1-o(1)$, for a constant $c_d>0$.
\end{proposition}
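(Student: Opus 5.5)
Plan: split by the size of $S$ and the radius. For large $|S|$ use edge expansion of random regular graphs; for small sets use a first-moment bound over the configuration model showing that neighborhoods grow at rate close to $q$.

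\textbf{Step 1 (reduction to one-step growth).} I claim it suffices to find $\epsilon>0$ and $c_0>0$ such that, with high probability, the following holds: every $S$ with $|S|\le \epsilon n$ satisfies
\[
|B(S,r)|\ge c_0|S|q^r \quad\text{whenever } c_0|S|q^r\le \epsilon n .
\]
Indeed, once a ball has size at least $\epsilon n$, Friedman's theorem (or the simpler Broder--Shamir bound) gives a uniform spectral gap, hence vertex expansion. Then $|B(T,1)|\ge(1+\delta)|T|$ for $|T|\le n/2$, and similarly for the complement. A bounded number $O(\log(1/\epsilon))$ of further steps therefore reaches $3n/4$. This costs only a factor $q^{O(1)}$ in $c$, because the target $c|S|q^r$ grows by $q$ per step while we need only $O(1)$ extra steps. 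The same argument covers all $|S|\ge\epsilon n$ directly.

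\textbf{Step 2 (growth of small sets at rate $q$).} For $|S|$ small, I would use a BFS exploration of the configuration model from $S$. Each newly revealed half-edge pairs with a uniformly random unmatched half-edge. When the explored set has size $m\le \epsilon n$, such a pairing lands inside the explored region with probability $O(m/n)$. This is the key estimate:
\begin{itemize}
\item Growth falls short of $q$ only through ``collisions''.
\item The number of collisions in exploring to size $m$ is stochastically dominated by $\mathrm{Bin}(dm, Cm/n)$.
\item Layers grow as $|\partial_{j+1}|\ge q|\partial_j|-2X_j$, where $X_j$ counts collisions in layer $j$.
\end{itemize}
Losing a constant fraction of the tree growth requires $\Omega(m)$ collisions among $dm$ pairings. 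The probability of this is at most $(Cm/n)^{\Omega(m)}$.

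\textbf{Step 3 (union bound over $S$).} This is the main obstacle. There are $\binom{n}{s}\le (en/s)^s$ sets of size $s$, and the final size is $m\ge c_0 s q^r$. The bound $(Cm/n)^{\kappa m}$ beats $(en/s)^s$ only when $m/s$ is large and $m/n\le\epsilon$ is small, so the parameters must be tuned.

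The difficulty is that early layers, where $m/s$ is only a constant, can genuinely fail. For example, $S$ may be a dense cluster, and dense clusters of size $s\lesssim\log n$ exist in random graphs only with a bounded cycle count. This is exactly what the constant $c$ absorbs.

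I would handle it via a deficiency count rather than per-layer growth. Define the excess of $B(S,r)$ as (edges inside) minus (vertices) plus $|S|$. If $|B(S,r)|<c_0|S|q^r$, then the subgraph spanned by $B(S,r)$, which is connected to $S$ through paths, has excess at least $\kappa|B(S,r)|$ once $c_0$ is small. The first moment of connected-relative-to-$S$ subgraphs of size $m$ with excess $\kappa m$ is at most
\[
(en/s)^s\, d^{O(m)}\,(Cm/n)^{\kappa m}.
\]
Summing over $s\le m\le\epsilon n$ gives $o(1)$, provided $\kappa m\ge 2s$. This holds when $c_0$ is small relative to $\kappa$ and $r\ge r_0$. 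For $r<r_0$, the claim is trivial with $c\le q^{-r_0}$, since $|B(S,r)|\ge|S|$.

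Finally, pass from the configuration model to uniform simple graphs by contiguity: the probability of simplicity is bounded away from $0$.
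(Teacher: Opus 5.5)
\textbf{There is a genuine gap in Steps 2--3.} Step~1 is fine, though you should invoke Friedman's theorem or a direct edge-isoperimetric first-moment bound rather than Broder--Shamir, whose $O(d^{3/4})$ estimate is not effective at small $d$.

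The argument breaks at the central claim of Step~3: that $|B(S,r)|<c_0|S|q^r$ forces the excess of $B(S,r)$ to be at least $\kappa|B(S,r)|$. As a deterministic statement this is false. Let $S$ induce $(d|S|-b)/2$ edges, with only $1\le b\le c_0(q-1)|S|/2$ edges leaving it, and let the graph be tree-like within distance $r$ of $S$. Then $|B(S,r)|=|S|+b\frac{q^r-1}{q-1}<c_0|S|q^r$ once $q^r>2/c_0$. Yet the excess stays equal to $e(S)\le d|S|/2$, which is $o(|B(S,r)|)$ as $r\to\infty$.

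The missing idea is that a defect at depth $j$ deletes about $q^{r-j}$ vertices of $B(S,r)$. Unrolling your own layer inequality (with $d$ in place of $q$ at layer $0$) gives $|\partial_r|\ge q^{r-1}\bigl(d|S|-2\sum_{j<r}q^{-j}X_j\bigr)$. So growth is governed by a depth-weighted defect count measured against $|S|$, not by raw excess measured against the volume. For the same reason, the Step~2 bullet ``losing a constant fraction requires $\Omega(m)$ collisions'' is wrong. For $d=3$, a single triangle at $x$ removes a third of $\partial_r$, and triangles occur with probability bounded away from zero.

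Requiring excess below $\kappa|B(S,j)|$ at every scale $j$ does not rescue the argument either. That only yields $|B(S,j)|\ge\frac{q}{1+2\kappa}|B(S,j-1)|$, a per-step loss that compounds to $e^{-\Theta(\kappa r)}$, which is exactly what \eqref{eq:lre} forbids. Moreover, with your $d^{O(m)}$ forest-counting factor, the smallest excess density your first moment can exclude at scale $j$ is of order $1/\log(n/|B(S,j)|)$. This is not summable over scales, so even a scale-dependent $\kappa$ cannot give a constant total loss.

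A working argument needs a multiscale estimate for each fixed $S$. The configuration model should be explored from $S$, so that the only entropy paid is $(en/|S|)^{|S|}$ for choosing $S$.
\begin{itemize}
\item Near the top scale $|S|q^j\approx\epsilon n$, defects are typical (about $\epsilon$ per vertex). However, they carry weight $q^{-j}$, and their large deviations are small enough to beat $(en/|S|)^{|S|}$.
\item Near $S$, the union bound can only exclude level-$0$ excess above $(1+\eta)|S|$. Sets with level-$0$ excess close to $|S|$ do occur: the vertex set of a long path already has $e(S)\ge|S|-1$. Growth survives only because $d|S|-2(1+\eta)|S|>0$.
\end{itemize}
This threshold is exactly where $d\ge3$ enters, and your proposal never isolates it.

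For comparison, the paper does not reprove the statement. It cites \cite{ADTT} for property~(A), which is precisely \eqref{eq:lre} for random $d$-regular graphs with $d\ge3$. Its appendix only derives the additional property~(B) of the old definition from a one-sided spectral gap.
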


The natural question of obtaining explicit non-random constructions for LRE graphs was formally posed in \cite{ADTT} and \cite{ADTTUniversal}, as well as asked in private communication by A. Naor and A. Eskenazis. We resolve this in strong form by constructing explicit LRE sequences with high girth, and furthermore by showing that the Ramanujan property is strictly stronger than LRE. 

\begin{theorem}[Main result I: Ramanujan implies long-range expansion]
\label{thm:construction}
Every Ramanujan graph of degree $d\ge3$ is a $1/(16q)$-long-range expander: for every $S \subseteq V$ and every integer $r\ge0$,
\begin{equation}\label{eq:growth}
 |B(S,r)|
 \ge
 \min\left\{\frac{3n}{4},\frac{|S|q^r}{16q}\right\}.
\end{equation}
Conversely, there is an explicit nonbipartite, vertex-transitive
long-range expander sequence of logarithmic girth that is not Ramanujan. 
\end{theorem}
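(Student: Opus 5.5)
The plan for \eqref{eq:growth} is to test the indicator $\mathbf 1_S$ against a polynomial in $A$ whose support is exactly the distance-$\le r$ relation. Let $A_j$ be the matrix counting non-backtracking walks of length $j$. Then $(A_j)_{xy}>0$ forces $\dist(x,y)\le j$, and $A_j\mathbf 1=dq^{j-1}\mathbf 1$ for $j\ge1$. The recursion $A_1A_j=A_{j+1}+qA_{j-1}$ (for $j\ge2$) gives $A_j=q^{j/2}\bigl(U_j(x)-q^{-1}U_{j-2}(x)\bigr)$ with $x=A/(2\sqrt q)$ and $U_j$ the Chebyshev polynomials of the second kind.

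On the Ramanujan interval write $x=\cos\theta$. There $U_j(\cos\theta)=\sin((j+1)\theta)/\sin\theta$, so every nontrivial eigenvalue of $A_j$ is at most $\Lambda_j:=(j+1)(1+q^{-1})q^{j/2}$ in absolute value. The eigenvalue $-1$ of $P$ in the bipartite case is handled by keeping the second trivial eigenvector $\pm\mathbf 1$ and choosing the parity of $j$ suitably.

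The first step is an expander mixing lemma for $A_j$. Put $T=V\setminus B(S,j)$, so that $\langle\mathbf 1_T,A_j\mathbf 1_S\rangle=0$. Expanding $\mathbf 1_S$ and $\mathbf 1_T$ in an eigenbasis gives
\[
 \frac{dq^{j-1}|S||T|}{n}\le\Lambda_j\sqrt{|S||T|},
 \qquad\text{hence}\qquad |T|\le \frac{\Lambda_j^2n^2}{d^2q^{2j-2}|S|}.
\]
Equivalently, apply Cauchy--Schwarz to $f=A_j\mathbf 1_S$, which is supported on $B(S,j)$:
\[
 (\textstyle\sum f)^2\le|B(S,j)|\,\|f\|^2,
 \qquad \|f\|^2\le\frac{(dq^{j-1}|S|)^2}{n}+\Lambda_j^2|S| .
\]
Either way, $|B(S,j)|\ge\frac12\min\{n,\;|S|q^{j}/(Cj^2)\}$. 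This is already long-range expansion, except for the polynomial factor $j^2$ and the threshold $n/2$ in place of $3n/4$.

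The main obstacle is removing the factor $(j+1)^2$. It comes from the spectral edge $\theta\approx0$, which reflects the Jordan-block behaviour of the non-backtracking operator on Ramanujan graphs. I would try two things in order.

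\emph{First attempt.} Replace $A_j$ by a combination $\sum_{i\le r}w_iA_i$ with positive weights, since positivity keeps the support inside $B(S,r)$. The weights should make the total mass $\sum_i w_idq^{i-1}$ of order $q^r$ while keeping the sup-norm of the spherical function $\sum_i w_iq^{i/2}(U_i-q^{-1}U_{i-2})$ over $[-1,1]$ at most $O(q^{r/2})$. The natural candidates are a geometrically weighted, Poisson-kernel-type combination, or the spherical transform of the indicator of a ball. I would verify the bound by summing the Chebyshev series explicitly.

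\emph{Second attempt.} Work at a fixed radius $r_0$, where the $j^2$ loss is only a constant, and then bootstrap by a doubling argument. The aim is to show $|B(S,r+r_0)|\ge c'q^{r_0}|B(S,r)|$ until the ball has size $n/2$. The final passage from $n/2$ to $3n/4$ would use the mixing bound on $T$ once the ball is linear in size. The target constant $1/(16q)$ suggests the $q^{j-1}$ normalisation, together with losing a factor $4$ at each of two Cauchy--Schwarz or threshold steps.

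For the converse I would take a fixed finite group $H$ and labels $h_s\in H$, and form the Cayley graph of $\mathrm{PSL}_2(\mathbb F_p)\times H$ with generators $(s,h_s)$, where $s$ runs over the Lubotzky--Phillips--Sarnak generators. This graph is a $|H|$-fold cover of an LPS graph, so girth is at least that of the base, hence logarithmic. Vertex-transitivity is automatic for a Cayley graph.

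Long-range expansion should transfer down the covering map $\pi$, since $\pi(B(\tilde S,r))\supseteq B(\pi\tilde S,r)$. This gives growth with the constant divided by $|H|$ up to the size $\frac34 n$ of the base. The final filling up to $\frac34$ of the cover would come from the uniform spectral gap of the lifted graph, which one needs to check separately. One way to check it is through the representations of $H$ via Kazhdan-type, or Bourgain--Gamburd, arguments. Another is to choose $H$ abelian and labels for which the twisted operators $\sum_s\chi(h_s)s$ are controlled by the Ramanujan bounds of the base.

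Finally, the labels would be chosen so that one twisted eigenvalue exceeds $2\sqrt q$, for instance by making most $\chi(h_s)$ equal to $1$. This makes the graph not Ramanujan, while keeping the graph nonbipartite by avoiding a character that is identically $-1$ on the generators. Checking this last pair of conditions is the delicate part of the converse.
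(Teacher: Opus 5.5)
\textbf{Gap in the growth bound.} Your framework is right: Cauchy--Schwarz applied to $f=F(A)\mathbf 1_S$ supported on $B(S,r)$. You also correctly trace the $j^2$ loss to the second-kind Chebyshev polynomials at the spectral edge. But neither proposed repair removes that loss, so \eqref{eq:growth} is not proved.

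Your first attempt rests on a false premise. Positivity is not needed for the support claim: $A^k\mathbf 1_S$ is supported on $B(S,k)$, so $F(A)\mathbf 1_S$ is supported on $B(S,r)$ for \emph{every} polynomial $F$ of degree at most $r$. Cauchy--Schwarz only uses $\|f\|_1\ge|\sum_x f(x)|=|F(d)|\,|S|$.

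Worse, within this $\ell_1$--$\ell_2$ argument positivity provably cannot help. At $\theta=0$ the transform of each $A_i$ is at least $(1-q^{-1})(i+1)q^{i/2}>0$. Hence for $w_i\ge0$ the sup over $[-1,1]$ is at least $(1-q^{-1})\sum_i w_i(i+1)q^{i/2}$, while the mass is $w_0+\sum_{i\ge1}w_idq^{i-1}$. Comparing term by term, mass divided by sup is $O(q^{r/2}/r)$. So every nonnegative combination (ball indicators, Poisson-type kernels) reproduces the loss $|S|q^r/r^2$.

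Your second attempt fails for a different reason. The fixed-radius bound gives only $c'\asymp r_0^{-2}<1$. Iterating $|B(S,r+r_0)|\ge c'q^{r_0}|B(S,r)|$ then yields $|S|\,(c'^{1/r_0}q)^r$, a growth rate strictly below $q$, i.e.\ a loss exponential in $r$. This is exactly the accumulation of multiplicative losses that long-range expansion forbids. A lossless bootstrap would need $c'\ge1$, which is essentially the statement itself.

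\textbf{The missing idea.} Drop positivity and take the first-kind Chebyshev polynomial $F_r(x)=2q^{r/2}T_r\bigl(x/(2\sqrt q)\bigr)$. In your basis, $F_r(A)=A_r-(q-1)(A_{r-2}+A_{r-4}+\cdots)$; the negative coefficients are what cancel the edge growth. Since $d/(2\sqrt q)=\cosh(\tfrac12\log q)$, one gets $F_r(d)=q^r+1$ and $|F_r|\le2q^{r/2}$ on $[-2\sqrt q,2\sqrt q]$. Hence $|B(S,r)|\ge n|S|q^r/(|S|q^r+4n)\ge\min\{3n/4,|S|q^r/16\}$ in one step, with the $3n/4$ threshold built in.

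Your bipartite treatment also fails as stated. For a single $A_j$, the $\pm\mathbf 1$ component keeps the full-size eigenvalue $(-1)^jdq^{j-1}$. If $S$ lies in one side, then $A_j\mathbf 1_S$ lives on one side, so no parity choice gets past $n/2$ by this method. The paper instead uses $(x+d)F_{r-1}(x)/(2d)$, which vanishes at $-d$ at the cost of one degree; this is where the constant $1/(16q)$ comes from.

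\textbf{The converse.} Your construction is the paper's in more general form; the paper takes $H=\mathbb Z/2\mathbb Z$ and labels only the pair $g^{\pm1}$ nontrivially. Its ``delicate part'' is a one-line perturbation bound. The twist changes the adjacency matrix by norm at most $4$, so with $p=13$ every eigenvalue on functions with zero sum on each copy has modulus at most $2\sqrt{13}+4<14$. Meanwhile the $\pm1$ function has eigenvalue $10>2\sqrt{13}$. No Kazhdan or Bourgain--Gamburd input is needed. Note that this half uses long-range expansion of the LPS base, so it inherits the gap in the first half.
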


\begin{corollary}
    Any Ramanujan sequence of logarithmic girth is also a long-range expander sequence of logarithmic girth; for example, the Lubotzky--Phillips--Sarnak \cite{LPS} construction.
Morgenstern's construction \cite{Mor94} gives such sequences whenever
$d-1$ is a prime power, including $d=3$.
\end{corollary}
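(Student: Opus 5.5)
The plan is to prove \eqref{eq:growth} with a spectral ``expander mixing'' argument. The test operator is a polynomial in $A$ that is supported on distance at most $r$, is as large as possible at the trivial eigenvalue $d$, and stays of size $O(q^{r/2})$ on the Ramanujan interval $[-2\sqrt q,2\sqrt q]$.

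\textbf{Choice of polynomial.} Let $A_j$ be the non-backtracking (distance-$j$ on the tree) matrices. They satisfy $A_0=I$, $A_1=A$, $A_2=A^2-dI$ and $AA_j=A_{j+1}+qA_{j-1}$ for $j\ge2$. Then $A_j(x,y)\neq0$ only if $\dist(x,y)\le j$. I will use
\[
p_r(A)=A_r-A_{r-2}\qquad(r\ge2),
\]
rather than $A_r$ itself. The standard identity for $x=2\sqrt q\cos\theta$ gives $p_r(x)=2q^{r/2}\cos(r\theta)$. Hence every eigenvalue of $p_r(A)$ coming from an eigenvalue of $A$ in $[-2\sqrt q,2\sqrt q]$ has modulus at most $2q^{r/2}$. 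By contrast, $A_r$ alone would cost a factor $(r+1)$, and that factor would destroy the constant $c$. At the trivial eigenvalue,
\[
p_r(d)=dq^{r-1}-dq^{r-3}=dq^{r-3}(q^2-1)\ge q^{r}.
\]
This bound uses $q\ge2$: it reduces to $(q+1)(q^2-1)\ge q^3$, i.e.\ $q^2\ge q+1$.

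\textbf{Mixing step.} Let $U=B(S,r)$ and assume $|U|<3n/4$; otherwise \eqref{eq:growth} already holds. Since $p_r(A)1_S$ is supported in $U$,
\[
\langle 1_U,p_r(A)1_S\rangle=\langle 1,p_r(A)1_S\rangle=p_r(d)|S|.
\]
Now decompose $1_S$ and $1_U$ into their components along the constant vector and the orthogonal complement.

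In the nonbipartite case, the constant part contributes $p_r(d)|S||U|/n$. The remainder is bounded by $2q^{r/2}\sqrt{|S||U|}$ by Cauchy--Schwarz. This yields
\[
p_r(d)|S|(1-|U|/n)\le 2q^{r/2}\sqrt{|S||U|},
\]
and since $1-|U|/n>1/4$,
\[
|U|\ge \frac{p_r(d)^2|S|}{64q^r}\ge \frac{|S|q^r}{64}.
\]

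\textbf{Bipartite case and small radii.} In the bipartite case there is the extra eigenvector $\chi=\pm1$ on the two sides, with $p_r(-d)=(-1)^rp_r(d)$. I will handle it by splitting $S$ and $U$ according to the bipartition and applying the argument on each side separately, with $r$ of the appropriate parity; the two-sided version loses only another constant factor.

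The cases $r\le1$ are trivial: there $\tfrac{|S|q^r}{16q}\le|S|\le|U|$. To improve $1/64$ to the stated $1/(16q)$, I expect to use a shifted polynomial (e.g.\ $p_r$ or $p_{r-1}$ times $A$), or simply to absorb the slack with the factor $q$ in the denominator.

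\textbf{Main obstacle for the first part.} I expect the main difficulty to be bookkeeping the constants so that they come out exactly as $1/(16q)$ uniformly in $r$ and in the bipartite case. The key conceptual step, avoiding the $(r+1)$ loss by using $A_r-A_{r-2}$, is what makes the constant independent of $r$.

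\textbf{Converse.} I would start from a nonbipartite LPS Cayley graph $X=\mathrm{Cay}(\Gamma,S)$ and pass to an explicit $2$-lift $\mathrm{Cay}(\Gamma\times\mathbb Z_2,S')$, where one generator pair is twisted by the nontrivial element of $\mathbb Z_2$. This graph is vertex-transitive and $d$-regular, and its girth is at least that of $X$, hence logarithmic. For LRE, projection does not increase distances and fibres have size $2$. So $|B(\tilde S,r)|\ge|B(\pi\tilde S,r)|\ge|\tilde S|q^r/(32q)$ until the projection covers $3/4$ of $X$. Reaching $3/4$ of the lift then takes $O(1)$ additional steps, using the lift's uniform spectral gap, which must be checked from the signed spectrum.

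\textbf{Main obstacle for the converse.} The genuinely hard part is certifying that this explicit lift is \emph{not} Ramanujan, i.e.\ that the signed adjacency operator has an eigenvalue outside $[-2\sqrt q,2\sqrt q]$. The first thing I would try is a twist that is essentially abelian, so that the new spectrum can be computed through a character of $\Gamma$ or by exhibiting an explicit test vector with Rayleigh quotient above $2\sqrt q$. If that fails, I would instead take a product with a fixed small Cayley graph whose spectrum visibly violates the Ramanujan bound.
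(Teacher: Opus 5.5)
\textbf{There is a genuine gap, at exactly the step you call the key one.} The identity $p_r(2\sqrt q\cos\theta)=2q^{r/2}\cos(r\theta)$ for $p_r=A_r-A_{r-2}$ is false.

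\begin{itemize}
\item At $r=2$: $A_2-A_0=A^2-(q+2)I$, whereas $2q\cos 2\theta$ corresponds to $x^2-2q$.
\item At $q=2$, $r=4$: you get $x^4-8x^2+9$ instead of $x^4-8x^2+8$.
\item In general, the nonbacktracking polynomials satisfy $\sum_{k\ge0}A_{r-2k}=q^{r/2}U_r\bigl(x/(2\sqrt q)\bigr)$, with $U_r$ the second-kind Chebyshev polynomial. Hence for $r\ge4$
\[
 A_r-A_{r-2}=q^{r/2}\Bigl(U_r-\tfrac2qU_{r-2}+\tfrac1{q^2}U_{r-4}\Bigr),
\]
and its value at $x=2\sqrt q$ is $q^{r/2}\bigl((1-q^{-1})^2r+1+2q^{-1}-3q^{-2}\bigr)$.
\end{itemize}

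So subtracting $A_{r-2}$ only shrinks the coefficient of the linear loss from $1-q^{-1}$ (for $A_r$ alone) to $(1-q^{-1})^2$; it does not remove it. Your remainder bound is a supremum over the nontrivial spectrum, and Ramanujan graphs have eigenvalues near $2\sqrt q$ (Alon--Boppana). With $r$ ranging up to order $\log_q n$, your mixing inequality therefore only yields $|B(S,r)|\gtrsim\min\{3n/4,|S|q^r/r^2\}$. That is not long-range expansion, since no uniform $c$ survives.

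\textbf{The repair.} Use the polynomial that really equals $2q^{r/2}T_r(x/(2\sqrt q))$, namely
\[
 F_r=A_r-(q-1)\bigl(A_{r-2}+A_{r-4}+\cdots\bigr).
\]
Equivalently, $F_0=2$, $F_1=x$, $F_{r+1}=xF_r-qF_{r-1}$; this is what the paper uses.

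\begin{itemize}
\item You do not need nonbacktracking matrices for the support property. Any polynomial of degree at most $r$ in $A$ maps $\mathbf 1_S$ to a vector supported on $B(S,r)$, whatever the signs of its entries.
\item Since $d/(2\sqrt q)=\cosh(\tfrac12\log q)$, you get $F_r(d)=q^r+1$ and $|F_r|\le2q^{r/2}$ on $[-2\sqrt q,2\sqrt q]$.
\item With $F_r$ in place of $p_r$, your expander-mixing computation against $\mathbf 1_U$ is correct. It gives $|B(S,r)|\ge\min\{3n/4,|S|q^r/64\}$, a valid alternative to the paper's bound $|B(S,r)|\ge\|f\|_1^2/\|f\|_2^2$.
\item The constant $1/64$ instead of $1/(16q)$ does not matter for this corollary, which needs only some uniform $c$. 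The girth is also untouched, since the graphs themselves are unchanged.
\end{itemize}

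\textbf{Bipartite case.} Your side-splitting can be completed, but it only gives growth up to $3/4$ of one side, i.e.\ $3n/8$ vertices. To reach $3n/4$ you must run it with both parities $r$ and $r-1$ and add the results. The paper's degree-$r$ polynomial $(x+d)F_{r-1}(x)/(2d)$, which vanishes at $-d$, is simpler.

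\textbf{Converse.} The converse construction is not needed for this corollary. The non-Ramanujan certificate you worried about is also immediate: the function equal to $+1$ on one copy and $-1$ on the other is an eigenvector with eigenvalue $d-4$. This exceeds $2\sqrt{d-1}$ once $d>10$; for the $14$-regular LPS graphs, $10>2\sqrt{13}$.
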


\begin{remark}[Other notions of combinatorial expanders]
Explicit lossless vertex expanders of ``constant'' degree\footnote{
For every $\epsilon>0$ and every sufficiently large fixed $d\ge d_0(\epsilon)$, they construct an infinite family of $d$-regular graphs with vertex expansion at least $(1-\epsilon)d$. Constructing optimal vertex expanders at prescribed degrees remains open.} have recently been
constructed \cite{HLMRZ}, following earlier progress in \cite{HLMOZ}. This is a different requirement from
long-range expansion: a fixed multiplicative loss from the tree growth
rate can accumulate under iteration, whereas \eqref{eq:lre} permits only
a constant loss over all radii. Conversely, the constant $c$ in \eqref{eq:lre} permits
substantial losses at small radii. Indeed, even Ramanujan graphs can fail to have
near-optimal one-step vertex expansion \cite{KK22}; related examples with
logarithmic girth and an asymptotically Ramanujan spectrum were
constructed in \cite{MM20}.
\end{remark}

\subsection{Cutoff}
Our second main result concerns the mixing of random walks---a ``dynamics'' interpretation of expansion---on long-range expanders. The principal
motivation is the celebrated conjecture that \textit{all} vertex-transitive expanders admit cutoff (see \cite[Section~1.1]{LP}). 
\begin{conjecture}[Peres, 2004 \cite{AIMMixingTimes}]\label{conj:transitive}
    Lazy random walk on every fixed-degree vertex-transitive
    expander sequence exhibits cutoff. 
\end{conjecture}

Informally, cutoff is a sharp threshold phenomenon in which a random walk is far from stationarity until some ``cutoff'' time, at which point the walk rapidly transitions to almost perfectly mixed. Here, mixing refers to approaching the equilibrium measure in total variation distance; recall that the uniform distribution is the equilibrium measure on any connected $d$-regular graph. For a walk which has cutoff, the mixing behavior is thus essentially determined at first order by just the cutoff location. The more delicate second-order behavior, describing the mixing rate within the cutoff window, is called the limit profile. We now state these definitions formally.

\begin{definition}[Cutoff and limit profile]\label{def:cutoff}
Let $G_n=(V_n,E_n)$ be a sequence of connected $d$-regular graphs,
where $|V_n|=n$, with adjacency matrices $A_n$.
For simple random walk and lazy random walk, respectively, set
\[
 P_n=\frac{A_n}{d}
 \qquad\text{and}\qquad
 P_n=\frac12\left(I+\frac{A_n}{d}\right).
\]
In the non-lazy case, assume that each $G_n$ is non-bipartite.
Denote the unique stationary distribution by $\pi_n(y)=1/n$. Write
\[
 \|\mu-\nu\|_{\mathrm{TV}}
 =\frac12\sum_{y\in V_n}|\mu(y)-\nu(y)|,
 \qquad
 d_n(t)=\max_{x\in V_n}
 \|P_n^t(x,\cdot)-\pi_n\|_{\mathrm{TV}},
\]
and define
\(
 t_{\mathrm{mix}}^{(n)}(\varepsilon)
 =\min\{t\in\mathbb Z_{\ge0}:d_n(t)\le\varepsilon\}.
\)
The sequence exhibits \emph{cutoff} at times $t_n$ if
\[
 \frac{t_{\mathrm{mix}}^{(n)}(\varepsilon)}{t_n}
 \longrightarrow1
 \qquad\text{for every fixed }\varepsilon\in(0,1).
\]
A sequence $w_n\ge1$ with $w_n=o(t_n)$ is a \emph{cutoff window}
around $t_n$ if, more precisely,
\[
 t_{\mathrm{mix}}^{(n)}(\varepsilon)
 =t_n+O_\varepsilon(w_n)
 \qquad\text{for every fixed }\varepsilon\in(0,1).
\]
When the following limit exists for every $s\in\mathbb R$, the function $F$ is called the \emph{limit profile}:
\[
 F(s)=\lim_{n\to\infty}
 d_n\bigl(\lfloor t_n+s w_n\rfloor\bigr),
 \qquad s\in\mathbb R.
\]
\end{definition}

A breakthrough theorem of Lubetzky and Peres \cite{LP} proved cutoff and also gave an explicit limit profile for the simple random walk on any fixed-degree sequence of nonbipartite Ramanujan graphs. In particular, the cutoff occurs at
\[
 t_n=\frac{d}{d-2}\log_q n,
\]
with a Gaussian profile in a window of order $\sqrt{\log n}$.
This is the fastest possible mixing scale allowed by the degree. Indeed, a simple random walk on $G$ is the projection of a simple random walk on the infinite $d$-regular tree. Away from the root, the distance of the latter walk increases with probability $(d-1)/d$ and decreases with probability $1/d$.
Thus its distance $R_t$ from the root satisfies
\[
 R_t=\frac{d-2}{d}\,t+O_{\mathbb P}(\sqrt t),
 \qquad
 |B(x,r)|\le C_dq^r.
\]
Consequently, at time $t = c t_n$, for any constant $c < 1$, the walk is concentrated on $o(n)$ vertices and its total-variation distance from the equilibrium distribution (i.e., the uniform distribution) tends to one. The fluctuations in $R_t$ are exactly the source of the Gaussian limit profile.

The matching upper bound requires substantially more.
Even under the Ramanujan assumption, the usual spectral estimate
\[
 d_n(t)\le\frac{\sqrt n}{2}
 \left(\frac{2\sqrt q}{d}\right)^t
\]
does not recover the leading constant in $t_n$. Lubetzky and Peres instead analyze the nonbacktracking walk (NBRW). It mixes approximately at time $\log_q n$, and the Gaussian cutoff profile comes from radial fluctuations of the tree walk, i.e., from passing between the NBRW and the SRW.

Our second main result gives the same conclusion under the
strictly weaker long-range expansion hypothesis. The question of cutoff for SRW on long-range expanders was conjectured to us in private communication by K. Chawla. 

\begin{theorem}[Main result II: cutoff and Gaussian profile]\label{thm:main}
Let $(G_n)$ be a sequence of $d$-regular $c$-long-range
expanders, with fixed $d,c$, and suppose that
$\lambda_n(A_n/d)\ge-1+\eta$ for some fixed $\eta>0$.
Write $P_n=A_n/d$ and $\pi_n(y)=1/n$.
Set
\[
 t_n=\frac{d}{d-2}\log_q n,
 \qquad
 \tau_d^2=\frac{4d(d-1)}{(d-2)^3}.
\]
Then, for every fixed $z\in\mathbb R$,
\begin{equation}\label{eq:profile}
 \sup_{x\in V(G_n)}
 \left|
 \left\|P_n^{\lfloor t_n+z\sqrt{\log_q n}\rfloor}
                  (x,\cdot)-\pi_n\right\|_{\mathrm{TV}}
 -\Phi\left(-\frac{z}{\tau_d}\right)
 \right|
 \longrightarrow0,
\end{equation}
where $\Phi$ is the standard normal distribution function.
In particular, for every fixed $\varepsilon\in(0,1)$,
\[
 t_{\mix}^{(n)}(\varepsilon)
 =t_n-\tau_d\Phi^{-1}(\varepsilon)\sqrt{\log_q n}
   +o(\sqrt{\log n}).
\]
\end{theorem}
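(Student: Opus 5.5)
The plan follows the Lubetzky--Peres strategy. The walk is lifted to the universal cover $T_d$, and the Ramanujan spectral input is used in only one place: to show the nonbacktracking walk mixes at time $\log_q n+o(\sqrt{\log n})$. That input will be replaced by a direct argument from \eqref{eq:lre} together with the spectral gap it implies and the assumption $\lambda_n\ge-1+\eta$.

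\textbf{Step 1 (reduction to the NBRW).} Write $P^t(x,\cdot)$ as a mixture over the radial process. Projected to $G$, simple random walk at time $t$ with tree distance $R_t=k$ from the root is distributed as a uniform nonbacktracking path of length $k$ from $x$, i.e.\ $\widetilde P^{(k)}(x,\cdot)$. Thus
\[
 P^t(x,\cdot)=\sum_k \mathbb P(R_t=k)\,\widetilde P^{(k)}(x,\cdot).
\]
The radial chain is a reflected biased walk, so $(R_t-\tfrac{d-2}{d}t)/\sqrt t$ converges to $N(0,\sigma^2)$. With $t=t_n+z\sqrt{\log_q n}$, the event $R_t\ge\log_q n+\omega$ has probability tending to $\Phi(z/\tau_d)$. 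Here $\tau_d$ is obtained by converting the radial variance $4(d-1)/d^2$ per step at speed $(d-2)/d$. The lower bound on TV distance is the volume argument already sketched: when $R_t\le\log_q n-\omega$, the walk lies in a ball of size $O(q^{\log_q n-\omega})=o(n)$. It therefore suffices to prove
\[
 \max_x\|\widetilde P^{(k)}(x,\cdot)-\pi\|_{\TV}\to0
 \quad\text{uniformly for } k\ge \log_q n+\omega_n,\ \omega_n=o(\sqrt{\log n}).
\]
Mixture convexity then yields \eqref{eq:profile}.

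\textbf{Step 2 (NBRW mixing from LRE).} This is the main obstacle. I would split $k=k_1+k_2$ with $k_1=\log_q n-C$ and $k_2=\omega_n$. For the first piece, LRE applied to single vertices (and to level sets) shows that the NBRW after $k_1$ steps is spread out: $\widetilde P^{(k_1)}(x,\cdot)$ has $\ell^2$ mass $O(1/n)$ up to constants. The natural first attempt is a collision bound: $\sum_y \widetilde P^{(k_1)}(x,y)^2$ counts pairs of nonbacktracking paths ending together. Bounding this requires that few vertices receive many paths. LRE applied to the set $S$ of heavy endpoints, run backward with reversed NBRW paths, bounds $|S|$: if $S$ were large and heavy, then $B(S,k_1)$ would be too small relative to $|S|q^{k_1}$. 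I expect this to be the delicate part, since LRE controls sets, not path multiplicities. A fallback is to work with the nonbacktracking operator $B$ on directed edges and use the Ihara--Bass relation. The spectral gap from LRE plus $\lambda_n\ge-1+\eta$ gives $\|B^{k}\|$ contraction on the orthogonal complement only at rate $\rho<1$ weaker than $q^{-1/2}$. This is why only the $\ell^2$-to-$\ell^\infty$ bound with constant-order density is desired from the first piece.

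\textbf{Step 3 (finishing).} Once the density of $\widetilde P^{(k_1)}(x,\cdot)$ is bounded by $K/n$ except on mass $\epsilon$, the remaining $\omega_n\to\infty$ steps are handled by the spectral gap of the NBRW on directed edges. This gap holds by the above, and aperiodicity comes from $\lambda_n\ge-1+\eta$. The gap contracts the $\ell^2(\pi)$ distance $O(\sqrt K)$ by $\rho^{\omega_n}\to0$. Combining the three steps and letting $\epsilon\to0$ gives \eqref{eq:profile} uniformly in $x$. The mixing-time formula follows by inverting $\Phi$.
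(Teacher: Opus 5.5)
\textbf{There is a genuine gap in your Step~2, at exactly the point the paper identifies as the core difficulty.} Your Step~1 is the paper's reduction (covering-tree mixture \eqref{eq:treemixture}, radial CLT, ball-volume lower bound), and it is fine. But you never show that the NBRW endpoint law $K_{k_1}(x,\cdot)$ is spread out.

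The claim that LRE at single vertices or level sets gives $\sum_y K_{k_1}(x,y)^2=O(1/n)$ is unsupported. Those applications of \eqref{eq:lre} bound only the number of endpoints, not how unevenly the $dq^{k_1-1}$ paths are distributed among them. A law like $\tfrac12\delta_x+\tfrac12\pi$ has full support and is far from spread.

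The heavy-set argument also fails, for two reasons:
\begin{itemize}
\item At radius $k_1=\log_q n-C$, LRE is saturated. As soon as $|S|\ge q^{C}/c$ one has $c|S|q^{k_1}\ge n$, so \eqref{eq:lre} asserts only $|B(S,k_1)|\ge 3n/4$.
\item Heaviness is a statement about paths from $x$ into $S$, whereas $B(S,k_1)$ contains every vertex within distance $k_1$ of $S$. There is no contradiction to extract.
\end{itemize}
The Ihara--Bass fallback does not help with this piece. With a gap weaker than Ramanujan, spectral information alone certifies NBRW spreading only at time $c\log_q n$ with $c>1$. In fact your target, an $O(1/n)$ collision bound, is stronger than anything the paper proves, and it is unclear whether LRE implies it at all.

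\textbf{The missing idea is the paper's forest argument} (Lemmas~\ref{lem:transfer} and~\ref{lem:directedentropy}), which targets entropy rather than $\ell^2$.
\begin{itemize}
\item First transfer LRE to the oriented-edge chain.
\item From a start set $S$, fix a forest of shortest directed paths. A walk that stays in the forest up to time $h$ is \emph{exactly} uniform on the shell $\Gamma_h(S)$. LRE gives $|\Gamma_h(S)|\ge\delta|S|q^h$, so this event has probability at least $\delta$.
\item On exit, condition on the exit time and the index of the exit edge. This again yields a uniform law on a set, from which one inducts.
\item The conditioning variable costs at most $\log(qh+1)$ of entropy per level. The result is $H(K_j(x,\cdot))\ge\log n-C\log\log n$ for $j\ge j_n$.
\end{itemize}

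The paper then never needs the NBRW itself to mix. It runs $\lceil(\log\log n)^2\rceil=o(\sqrt{\log n})$ further \emph{simple}-walk steps and applies Ozawa's entropy-production inequality (Lemma~\ref{lem:smoothing}) to the self-adjoint $P$ with its absolute spectral gap. This also avoids a second soft spot in your Step~3. The NB transition matrix is not normal, so a spectral-radius bound yields $\ell^2$ contraction only after the Ihara--Bass $2\times2$ block analysis, and then at rate $Ck\rho^k$ rather than $\rho^k$.

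Your Step~3 could be rescued using the paper's oriented-edge entropy bound. An entropy deficit of $C\log\log n$ forces density at most $(\log n)^A/N$ outside mass $O(C/A)$. The factor $\omega_n\rho^{\omega_n}$ absorbs this once $\log\log n\ll\omega_n\ll\sqrt{\log n}$. But that spreading estimate is exactly what your proposal lacks.
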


Here, the assumption on $\lambda_n$ quantitatively rules out bipartite structure, ensuring mixing. For the lazy walk, no additional spectral hypothesis is needed:
\begin{corollary}[Cutoff for lazy walk]\label{cor:lazy}
The lazy random walk on every fixed-degree long-range expander sequence has 
cutoff at $\frac{2d}{d-2}\log_q n$. Its limit profile is given by \eqref{eq:profile}, with $P_n$ replaced by $(I+A_n/d)/2$, centered at $2t_n$ instead of $t_n$, and with $\tau_d$ replaced by
\[
    \tau_{d,\mathrm L}=\sqrt{\frac{2d(d^2+4d-4)}{(d-2)^3}}.
\]
\end{corollary}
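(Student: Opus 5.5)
We deduce Corollary~\ref{cor:lazy} from Theorem~\ref{thm:main} by writing the lazy walk as a binomially subordinated simple random walk. Write $L_n=(I+A_n/d)/2$. Then
\[
 L_n^t=\sum_{k=0}^t\binom tk2^{-t}P_n^k ,
\]
so $L_n^t(x,\cdot)=\mathbb E[P_n^{N_t}(x,\cdot)]$ with $N_t\sim\mathrm{Bin}(t,1/2)$ independent of the walk.

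Theorem~\ref{thm:main} needs $\lambda_n\ge-1+\eta$, which a long-range expander may violate (it may be bipartite). We therefore apply the machinery of Theorem~\ref{thm:main} in the form it is actually proved, namely the profile of $\|P_n^k(x,\cdot)-\pi_n\|_{\TV}$, but with the bipartite obstruction removed by averaging. The preferred route is to observe that the proof of Theorem~\ref{thm:main} uses the spectral hypothesis only to control $\|P^k(x,\cdot)-\pi\|$ after the nonbacktracking/radial reduction. For lazy walks, the mixture over the two parities of $N_t$ kills the $-1$ eigendirection: $\operatorname{Var}$-type bounds for $L_n$ involve $\max_{i\ge2}|(1+\lambda_i)/2|\le1-\gamma/2$, using the uniform spectral gap implied by LRE, as noted in the introduction. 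If that route fails, the alternative is to replace $G_n$ by $G_n$ plus loops, or more concretely to treat the walk $L_n^2$, which is aperiodic with a uniform absolute gap.

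Next we identify the location and window. The quantity $L_n^t(x,\cdot)$ is a mixture of $P_n^k(x,\cdot)$ with $k=t/2+O(\sqrt t)$. The cutoff for $P$ at $t_n$ has Gaussian fluctuations of size $\tau_d\sqrt{\log_q n}$. Those fluctuations come from $R_k-\frac{d-2}{d}k$, and they are independent of the binomial fluctuation $N_t-t/2$. Set $t=2t_n+z\sqrt{\log_q n}$. The effective SRW time is then
\[
 N_t=t_n+\tfrac z2\sqrt{\log_q n}+\xi\sqrt{t}/2,\qquad \xi\approx N(0,1),
\]
and $\sqrt{t}\approx\sqrt{2t_n}$.

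Conditioning on $N_t$ and applying \eqref{eq:profile} uniformly in $x$ gives a conditional distance of $\Phi(-z'/\tau_d)$. Here
\[
 z'=\frac z2+\frac{\xi}{2}\sqrt{\frac{2d}{d-2}} .
\]
This uses $t_n=\frac{d}{d-2}\log_q n$ and requires local uniformity of \eqref{eq:profile} in $z$ on compacts. That uniformity follows from monotonicity of $d_n(t)$ in $t$ together with continuity of $\Phi$, via a standard Pólya/Dini argument.

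The main obstacle is that total variation of a mixture is not the mixture of total variations. The lower bound $\|\mathbb E\mu_N-\pi\|\le\mathbb E\|\mu_N-\pi\|$ goes in only one direction. To get the exact profile we return to the structure of the proof of Theorem~\ref{thm:main}. There the distance is asymptotically $\mathbb P(\text{walk has not yet reached radius }\approx\log_q n)$, the mass on the small ball $B(x,\log_q n-\omega)$ being essentially the whole deficit and the rest being nearly uniform. The same description holds for the lazy walk, with $R$ replaced by the lazy radial process, whose drift is $\frac{d-2}{2d}$ per step. Its variance per step is
\[
 \frac12\Bigl(1-\bigl(\tfrac{d-2}{d}\bigr)^2\Bigr)+\frac14\bigl(\tfrac{d-2}{d}\bigr)^2 ,
\]
combining the tree-step variance and the Bernoulli laziness variance. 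The distance is therefore $\mathbb P(R^{L}_t<\log_q n)\to\Phi(-z/\tau_{d,\mathrm L})$. Finally we verify the arithmetic: with drift $\mu=\frac{d-2}{2d}$ and variance $\sigma^2=\frac{d^2+4d-4}{4d^2}$, we get $\tau^2=\sigma^2/\mu^3=\frac{2d(d^2+4d-4)}{(d-2)^3}$, matching $\tau_{d,\mathrm L}$ and confirming cutoff at $2t_n$.
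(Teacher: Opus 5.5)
Your final argument is exactly the paper's proof of this corollary. You rerun the proof of Theorem~\ref{thm:main} with the lazy radial process in the covering-tree mixture \eqref{eq:treemixture}, and you power the smoothing step with $\max_{i\ge2}|(1+\lambda_i)/2|\le1-\gamma/2$, which comes from the LRE spectral gap. Your constants also match the paper: $\mu=\frac{d-2}{2d}$, $\sigma^2=\frac12-\frac{v^2}{4}=\frac{d^2+4d-4}{4d^2}$, and hence $\tau_{d,\mathrm L}^2=\sigma^2/\mu^3$. The binomial-subordination detour and the fallbacks ($L_n^2$, adding loops) are unnecessary, since $(I+A_n/d)/2$ already satisfies the hypothesis of Lemma~\ref{lem:smoothing}.
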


The core challenge is to use crude volume growth to control how unevenly nonbacktracking paths of a fixed length are distributed among their endpoints.

\subsection*{Acknowledgments}
This work was initiated at the 2025 AIM Workshop ``Metric Embeddings'' at Caltech. There, the author received numerous helpful comments on LRE that prompted parts of the current work, especially from A. Naor, A. Eskenazis, T. Hutchcroft, and K. Chawla. In particular, K. Chawla specifically asked us the question of whether LRE graphs have cutoff, and also suggested using the entropic method of Ozawa. The author also greatly benefited from discussing long-range expansion with his coauthors P. Dodos, K. Tyros, and K. Tikhomirov on many occasions. 

\subsection*{AI disclosure}
LLMs were used at the level of genuine collaboration. The result statements, research direction are human. The exposition, while overwhelmingly human, benefited from AI literature search and editing. Both human and AI contributed substantially to the development of each of the proofs. In particular, conversations (spanning a couple hundred messages) were had with Opus 4 and GPT 5.4-6, which were specifically valuable for understanding and adapting the methods in the literature.

\section{Explicit constructions}
We first prove the growth bound by applying a Chebyshev polynomial to the
adjacency matrix. We then show that neither implication in the hierarchy
Ramanujan $\Longrightarrow$ long-range $\Longrightarrow$ spectral expansion
can be reversed.

\subsection{From Ramanujan to long-range expansion}
Assume first that $G$ is nonbipartite. The usual spectral
estimate, applied to $A^r\mathbf1_S$, gives only
\[
 |B(S,r)|\ge
 \frac12\min\Big\{n,\,
 |S|\Big(\frac{d^2}{4q}\Big)^r\Big\}.
\]
Since $d^2/(4q)<q$, this growth rate is too small to establish
long-range expansion. The improvement comes from replacing $A^r$ by a better
polynomial in $A$. For any polynomial $F$ of degree at most
$r$, the vector $f=F(A)\mathbf1_S$ is supported on $B(S,r)$:
each power $A^k$ only connects vertices at distance at most $k$.
Consequently, Cauchy--Schwarz gives
\[
 |B(S,r)|\ge\frac{\|f\|_1^2}{\|f\|_2^2},
\]
where the norms use counting measure. Regularity and the
Ramanujan bound imply
\[
 \|f\|_1\ge\left|\sum_x f(x)\right|=|F(d)|\,|S|,
 \qquad
 \|f\|_2^2\le
 \frac{F(d)^2|S|^2}{n}
 +|S|\max_{|\lambda|\le2\sqrt q}|F(\lambda)|^2.
\]
Here the second inequality follows by decomposing
$\mathbf1_S$ into its constant and mean-zero parts.
The entries of $f$ may be negative, which is why the
$\ell_1$ bound is an inequality rather than an equality.
Thus we seek a polynomial whose value at $d$ is large
compared with its values on $[-2\sqrt q,2\sqrt q]$. Chebyshev polynomials solve precisely this extremal problem.

\begin{proof}[Proof of \eqref{eq:growth}]
Let $T_r$ denote the Chebyshev polynomial of the first kind,
characterized by $T_r(\cos\theta)=\cos(r\theta)$, and set
\[
 F_r(x)=2q^{r/2}T_r\left(\frac{x}{2\sqrt q}\right).
\]
The change of variables sends the Ramanujan interval to
$[-1,1]$, where $|T_r|\le1$. The prefactor makes $F_r$ monic
for $r\ge1$ and gives the recurrence
\[
 F_0=2,\qquad F_1=x,\qquad
 F_{r+1}=xF_r-qF_{r-1}.
\]
To evaluate at the trivial eigenvalue, observe that
\[
 \frac{d}{2\sqrt q}
 =\cosh\left(\frac{\log q}{2}\right).
\]
Using $T_r(\cosh u)=\cosh(ru)$, we obtain
\[
 F_r(d)=q^r+1,
 \qquad
 \max_{|\lambda|\le2\sqrt q}|F_r(\lambda)|
 \le2q^{r/2}.
\]
The constant component therefore grows like $q^r$, while
the mean-zero component grows at most like $q^{r/2}$.
Squaring their ratio in the $\ell_1$--$\ell_2$ estimate
produces exactly the desired volume growth:
\[
 |B(S,r)|
 \ge
 \frac{(q^r+1)^2|S|^2}
 {(q^r+1)^2|S|^2/n+4q^r|S|}
 \ge
 \frac{n|S|q^r}{|S|q^r+4n} \ge \min\left\{\frac{3n}{4},\frac{q^r|S|}{16}\right\}.
\]
For bipartite graphs, use instead the degree-$r$ polynomial
$(x+d)F_{r-1}(x)/(2d)$. Its additional factor equals one
at $d$ and zero at $-d$, removing the exceptional negative
eigenvalue at the cost of changing the polynomial degree by one. Its value at $d$ is $q^{r-1}+1$, and its absolute value on
$[-2\sqrt q,2\sqrt q]$ is at most $2q^{(r-1)/2}$.
The same calculation, with $r-1$ in place of $r$, gives
$|B(S,r)|\ge\min\{3n/4,|S|q^{r-1}/16\}$, proving
\eqref{eq:growth}.
\end{proof}

\begin{remark}[Related methods]
The volume bound follows directly from the polynomial set-distance
inequality of van Dam and Haemers, as recorded in
\cite[Theorem~2.1]{vD98}; see also \cite{LPS,CFM}. It is also closely related to the spectral
analysis of nonbacktracking walks \cite{ABLS}: their
path-counting polynomials satisfy the same recurrence,
but involve Chebyshev polynomials of the second kind.
The latter have supremum $r+1$ on $[-1,1]$, so their direct
spectral estimate loses a factor of order $r^2$ in the
volume bound. First-kind polynomials remove this loss.
Their use is possible because the support argument does
not require positivity.
\end{remark}

\subsection{Failure of the converse}
We construct a sequence of nonbipartite, vertex-transitive
long-range expanders of logarithmic girth that are not Ramanujan.
The Lubotzky--Phillips--Sarnak construction with parameter $p=13$
provides an infinite sequence of nonbipartite $14$-regular Ramanujan
Cayley graphs of logarithmic girth \cite{LPS}.
Fix one such graph $H$, and take two copies $H_1,H_2$.

Choose a generator $g$ and its inverse (such that $g$ and $g^{-1}$ are distinct). All edges corresponding to the other generators are left unchanged within $H_1$ and $H_2$. However, edges corresponding to these two generators
switch copies of the graph. That is, if $v_1 \in V(H_1)$ and $v_2 \in V(H_2)$ correspond to copies of the same vertex, then we replace the edges $(v_1,gv_1 ) \in E(H_1)$ and $(v_2,gv_2) \in E(H_2)$ with $(v_1,gv_2)$ and $(v_2,gv_1)$, and similarly for $g^{-1}$. 

The resulting graph $G$ is still $14$-regular. Further, since $H$ is a
Cayley graph on some group $K$, it follows that $G$ is a Cayley graph on
$K\times\mathbb Z/2 \mathbb Z$, and hence is still vertex-transitive.
However, $G$ is not Ramanujan: the function which is $+1$ on $V(H_1)$ and $-1$ on $V(H_2)$ is an eigenvector of the adjacency matrix, with eigenvalue
\[
 12-2=10>2\sqrt{13}.
\]

Next, we verify absolute expansion. The idea is to view the spectrum of $G$ as a small perturbation of the spectrum of two disjoint copies of $H$. Let $A_0$ be the adjacency matrix of two disjoint copies of $H$. Each vertex loses two neighbors
and gains two new neighbors when we form $G$. Thus the symmetric
matrix $A_G-A_0$ has absolute row sums $4$, giving
$\|A_G-A_0\|_{2\to2}\le4$.

Functions which are constant on each copy (i.e., of the form $f(v_i) = c_i$ for all $v_i \in H_i$, for $i \in \{1,2\}$, with $c_1$ and $c_2$ possibly equal) form an invariant
two-dimensional space with eigenvalues $14$ and $10$.
The orthogonal complement of this space consists of
functions whose sum on each copy is zero. For every such $f$,
the Ramanujan bound for $H$ gives
\[
 \|A_Gf\|_2
 \le \|A_0f\|_2+\|(A_G-A_0)f\|_2
 \le (2\sqrt{13}+4)\|f\|_2.
\]
Hence $14$ is a simple eigenvalue and every other eigenvalue
has modulus at most $2\sqrt{13}+4<14$. Therefore $G$ is connected,
nonbipartite, and has a uniform absolute spectral gap.

Finally, let $p:V(G)\to V(H)$ be the covering projection.
Every path in $H$ lifts from either vertex above its
starting point, so
\[
 p(B_G(S,r))=B_H(p(S),r).
\]
Since $|p(S)|\ge|S|/2$, the long-range growth bound for
$H$ implies
\[
 |B_G(S,r)|
 \ge c\min\{|V(H)|,|p(S)|13^r\}
 \ge\frac c2\min\{|V(G)|,|S|13^r\}.
\]
this weak growth bound (which differs from \eqref{eq:lre} in the location of the implicit constant) combined with a uniform spectral gap---which implies that every set of at least
$(c/2)|V(G)|$ vertices grows to at least $3|V(G)|/4$ vertices---gives \eqref{eq:lre}, after decreasing the expansion constant. Finally, it is immediate from construction that $G$ has at least the same girth as $H$, which has logarithmic girth. This completes the proof of Theorem~\ref{thm:construction}. \qed

\subsection{Spectral expansion does not imply long-range expansion}
Let $(H_m)$ be a $d$-regular expander sequence and let
$G_m$ denote the $(d+1)$-regular graph consisting of two
copies of $H_m$, with an edge joining each vertex to its counterpart. Its adjacency eigenvalues are $\lambda_i(A_{H_m})\pm1$, so it has a
uniform one-sided spectral gap. However,
\[
 |B_{G_m}((x,i),r)|\le2|B_{H_m}(x,r)|\le C_d(d-1)^r,
\]
so that $(G_m)_m$ is not a long-range expander sequence of degree $d+1$. Taking $r=\lfloor\tfrac12\log_d|V(G_m)|\rfloor$
contradicts \eqref{eq:lre} for every fixed $c>0$.

\section{Cutoff}\label{sec:cutoff-proof}

We analyze the nonbacktracking random walk (NBRW), which never immediately reverses
an edge. This is the strategy utilized for Ramanujan graphs in Lubetzky--Peres
\cite{LP}, but we replace their spectral analysis of its transition
operator by an entropy estimate, inspired by Ozawa's entropic proof \cite{Ozawa} of the Lubetzky--Peres result. Whereas Ozawa
uses the Ramanujan spectral bound to control entropy production for the simple
random walk, we instead use combinatorial arguments enabled by LRE. The first main component of the proof is that long-range expansion
forces the entropy of the endpoint of a NBRW to be nearly maximal. The second component follows the existing approach of using spectral expansion to turn these entropic estimates into cutoff.

We begin with some notation. Let $M_j(x,y)$ be the number of nonbacktracking paths of length $j$ from $x$ to $y$, and set
\[
     D_j=dq^{j-1},\qquad K_j(x,y)=\frac{M_j(x,y)}{D_j}
     \quad(j\ge1),\qquad K_0(x,\cdot)=\delta_x.
\]
That is, $K_j(x,\cdot)$ is the endpoint law of the NBRW. Also denote the first radius at which there are at least $n$ paths by
\[
 j_n=\min\{j\ge1:dq^{j-1}\ge n\}=\log_q n+O_d(1).
\]
Finally, for a probability law $\mu$ on $V$, write $H(\mu)=-\sum_y\mu(y)\log\mu(y)$ with the convention $0\log0:=0$. The main estimate is the following.

\begin{proposition}[Endpoint entropy]\label{prop:entropy}
Let $G$ be a $c$-long-range expander. There is a constant $C=C(d,c)$ such that, for all sufficiently large $n$,
\begin{equation}\label{eq:criticalentropy}
 H(K_j(x,\cdot))\ge\log n-C\log\log n
 \qquad(x\in V,\ j\ge j_n).
\end{equation}
In particular,
\begin{equation}\label{eq:fibers}
 \mathbb E\log M_{j_n}(x,Y)\le C\log\log n,
 \qquad Y\sim K_{j_n}(x,\cdot).
\end{equation}
\end{proposition}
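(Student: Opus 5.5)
The plan is to rewrite the entropy as a fiber-size statement and then reduce everything to the critical length $j=j_n$. Since $K_j(x,y)=M_j(x,y)/D_j$, we have exactly
\[
 H(K_j(x,\cdot))=\log D_j-\mathbb E\log M_j(x,Y),\qquad Y\sim K_j(x,\cdot).
\]
At $j=j_n$ we have $\log D_{j_n}=\log n+O_d(1)$. So \eqref{eq:criticalentropy} at $j=j_n$ and \eqref{eq:fibers} are equivalent up to the constant. The main task is therefore to show that a typical NBRW endpoint $Y$ is reached by only $(\log n)^{O(1)}$ nonbacktracking paths.

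\textbf{Reduction to $j=j_n$.} For $j>j_n$ I will use the directed-edge chain. The NBRW is a Markov chain on the $dn$ directed edges, and its transition matrix is doubly stochastic, since each directed edge has exactly $q$ predecessors and $q$ successors. Let $\nu_j$ be the law of the last directed edge of a length-$j$ NBRW path from $x$. Then $\nu_j$ is the image of $\nu_{j_n}$ under $j-j_n$ doubly stochastic steps, so $H(\nu_j)\ge H(\nu_{j_n})$. Two further comparisons pass between edges and vertices:
\begin{itemize}
\item the vertex law is the head-marginal of the edge law, and each head has at most $d$ incoming edges, so $H(K_j)\ge H(\nu_j)-\log d$;
\item the edge law determines the vertex law, so $H(\nu_{j_n})\ge H(K_{j_n})$.
\end{itemize}
This gives $H(K_j)\ge H(K_{j_n})-\log d$. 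The $O_d(1)$ loss is absorbed into $C\log\log n$.

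\textbf{Critical fibers.} Fix $x$, write $j=j_n$, and for dyadic $L$ let
\[
 W_L=\{y:M_j(x,y)\ge L\},\qquad m_L=K_j(x,W_L).
\]
Then $\mathbb E\log M_j(x,Y)\le \log L_0+\sum_{L\ge L_0}m_L\log(2L)$ over dyadic $L$. It therefore suffices to show that $m_L$ decays, say $m_L\le C(\log n)^{C}/L$, once $L\ge(\log n)^{C}$. The trivial bound is $|W_L|\le m_LD_j/L\lesssim m_L n/L$, so heavy fibers occupy few vertices. The idea is to contradict this smallness using LRE.

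Split a length-$j$ path at time $j-r$. This gives
\[
 M_j(x,y)\le\sum_z M_{j-r}(x,z)M_r(z,y),
\]
and every time-$(j-r)$ position of a path ending in $W_L$ lies in $B(W_L,r)$. Choose $r$ so that $cq^r|W_L|$ is comparable to $n$. Then LRE, applied with $S=W_L$, forces $B(W_L,r)$ to cover most of the graph. The path count through this ball is only $D_{j-r}\approx n q^{-r}$. So the $m_LD_j$ paths ending in $W_L$ must funnel through a small set of intermediate vertices carrying large multiplicity $M_{j-r}$. This produces heavy fibers at the shorter length $j-r$.

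Iterating across scales $r$, and comparing with $S=\{x\}$ at the bottom where $|B(x,r)|\ge c q^r$ rules out heavy fibers, should bound $m_L$. Each scale loses a constant factor from $c$ and from the junction constraint at the splitting time. With $O(\log\log n)$ dyadic scales relevant, the losses accumulate to the $(\log n)^{C}$ allowance, which is where the $\log\log n$ comes from.

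\textbf{Main obstacle.} I expect the hardest step to be making the funneling inequality quantitative. LRE only controls the cardinality $|B(S,r)|$, not how the mass of paths is spread over it. Turning "$B(W_L,r)$ is large" into "path mass at time $j-r$ is concentrated" requires a Cauchy--Schwarz or double-counting step of the form
\[
 \sum_{y\in W_L}M_j(x,y)\le \Big(\sum_z M_{j-r}(x,z)^2\Big)^{1/2}\Big(\sum_z \big(\textstyle\sum_{y\in W_L}M_r(z,y)\big)^2\Big)^{1/2},
\]
and the right-hand side must be bounded using that each $z$ reaches at most $dq^{r-1}$ endpoints. The first thing I would try is to run this as an induction on the second moments $\sum_z M_i(x,z)^2$ for $i\le j$, using LRE at each scale to show these moments stay within a $(\log n)^{O(1)}$ factor of $D_i^2/\min(n,q^i)$. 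Jensen then gives the fiber bound \eqref{eq:fibers}.
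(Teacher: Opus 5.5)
Your identity $H(K_j(x,\cdot))=\log D_j-\mathbb E\log M_j(x,Y)$ is correct, and so is the reduction from $j>j_n$ to $j=j_n$ through the doubly stochastic oriented-edge chain (losing $\log d$); this is essentially the paper's final step. The gap is the core estimate. The funneling step is never made quantitative, and both concrete targets you propose for it are false for long-range expanders: the tail bound $m_L\le C(\log n)^C/L$, and the second-moment induction $\sum_z M_i(x,z)^2\le(\log n)^{O(1)}D_i^2/\min\{n,q^i\}$.

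Here is a counterexample. Take $d\ge4$ and plant into an LRE graph a copy of $K_d$ (two copies if $d$ is odd, for parity). Each clique vertex sends one edge outward, after deleting $O(d)$ host edges to free endpoints. Host distances grow by at most $O(d)$, so the result is still LRE with a smaller constant; in particular $|B(x,r)|\ge c'q^r$ for $x$ in the clique. Yet a fraction $\frac{d-1}{d}\bigl(\frac{q-1}{q}\bigr)^{j-1}$ of the length-$j$ nonbacktracking paths from $x$ never leave the clique. So its $d$ vertices carry fibers $M_j(x,z)\asymp(q-1)^j$, which gives $\sum_zM_j(x,z)^2/D_j\gtrsim((q-1)^2/q)^j=n^{\Omega(1)}$ at $j=j_n$, and $m_L\gg(\log n)^C/L$ at $L\asymp(q-1)^j$. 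For $d=3$, the Petersen graph minus an edge, attached at its two degree-two vertices, plays the same role: its nonbacktracking growth rate is at least $2^{6/7}>\sqrt2$.

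So ball growth from $x$ does not rule out heavy fibers ``at the bottom'' of your iteration. The heavy part contributes only $O(j((q-1)/q)^j)=o(1)$ to $\mathbb E\log M_j$, so the proposition survives. But any argument through second moments, Jensen, or Markov-type tails on fiber sizes cannot work: LRE constrains only cardinalities, and rare, extremely heavy fibers are compatible with it.

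The missing idea is to extract exactly uniform pieces of the law, so that set growth converts directly into entropy. The paper first transfers LRE to the oriented-edge chain (Lemma~\ref{lem:transfer}); your sketch applies LRE on $G$ and never handles the nonbacktracking junction constraint. It then proves Lemma~\ref{lem:directedentropy} by induction over starting sets $S$. Fix a shortest-path forest rooted at $S$. By the growth bound, the walk stays on the forest through time $h$ with probability $|\Gamma_h(S)|/(|S|q^h)\ge\delta$, and on that event $Y_h$ is uniform on the shell $\Gamma_h(S)$. On departure at time $t$ through the $i$th listed nonforest edge, $Y_t$ is uniform on a set $S_{t,i}$ of known size, and the induction hypothesis applies to the remaining $h-t$ steps. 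Each level costs at most $H(Z)\le\log(qh+1)$. Departures carry total probability at most $1-\delta$, so these costs sum geometrically to $\delta^{-1}\log(qh+1)$; with $|S|=1$ and $h=\log_q(dn)-O(1)$, this is the $C\log\log n$ loss. Entropy weights each branch by its probability, which is why rare heavy concentrations like the planted clique cost almost nothing there.
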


For the latter condition, the expectation in \eqref{eq:fibers} weights an endpoint in proportion to the number of paths reaching it. That is, \eqref{eq:fibers} says that a sampled NBRW path has few alternative nonbacktracking paths of the same length between its endpoints, in the sense of the displayed logarithmic average. The main challenge is transferring volume growth, which only controls how many endpoints are available, to an entropic control of how unevenly the paths are distributed among them.
For example, $\mu=\frac12\delta_x+\frac12\pi$ has full support but far from maximal entropy:
$H(\mu)=\frac12\log n+O(1)$.

\subsection{LRE for the NBRW state-space}
\label{sec:nonbacktracking}

To make the non-backtracking walk into a Markov chain, we use the standard approach of recording its last oriented
edge. The state space and transition matrix are
\[
 \Omega=\{(u,v):u\sim v\},\qquad N=dn,\qquad
 Q((u,v),(v,w))=\frac1q\quad(w\ne u).
\]
Every state has $q$ predecessors and $q$ successors, so the uniform
law on $\Omega$ is stationary. For $F\subseteq\Omega$, let
$B(F,r) := B_\Omega(F,r)$ be its radius-$r$ ball, with distances measured along
directed edges. We first transfer the LRE volume growth condition to this
directed graph.

\begin{lemma}\label{lem:transfer}
There is $c_*=c_*(d,c)>0$ such that every $c$-long-range expander satisfies, for every $F \subseteq \Omega$ and every integer $r\ge0$,
\begin{equation}\label{eq:edgegrowth}
    |B(F,r)|\ge c_*\min\{N,|F|q^r\}.
\end{equation}
\end{lemma}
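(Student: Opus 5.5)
The plan is to compare directed balls in $\Omega$ with vertex balls in $G$, using the geometric fact that nonbacktracking walks are forbidden only from immediately reversing. Geodesics in $G$ never revisit a vertex, so they are nonbacktracking. Hence if $f=(u,v)\in F$ and $z$ is a vertex at distance $k\le r$ from $v$ joined to $v$ by a geodesic whose first step is not $u$, then the last oriented edge of that geodesic lies in $B(\{f\},r)$. Every vertex has $d$ incoming oriented edges, so
\[
 |B(F,r)|\ \ge\ \bigl|\{\text{heads of states in }B(F,r)\}\bigr|,
\]
and it suffices to show that the set of heads reached is of size $\gtrsim\min\{n,|F|q^r\}$. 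The heads of $F$ form a set $S$ with $|F|/d\le|S|\le|F|$, so losing factors of $d$ is harmless.

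\textbf{Step 1 (removing the forbidden direction).} I will handle the one bad direction by reversal. Let $\iota(u,v)=(v,u)$. If every geodesic from $v$ to $z$ starts with $u$, then $z$ is reached from $\iota f=(v,u)$ within $r-1$ steps. Therefore
\[
 \mathrm{heads}\,B(F,r)\ \cup\ \mathrm{heads}\,B(\iota F,r)\ \supseteq\ B(S,r)\setminus S .
\]
Combined with LRE for $S$, this gives $|B(F,r)|+|B(\iota F,r)|\ge c\min\{3n/4,|S|q^r\}-|S|$.

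\textbf{Step 2 (symmetrizing), the expected main obstacle.} Step~1 only bounds the sum $|B(F,r)|+|B(\iota F,r)|$, and the tree shows that a single edge genuinely sees only a cone. To get a bound for $F$ alone, I would first push $F$ forward two steps. Let $F_2=B(F,2)\setminus B(F,1)$, the states at exact distance two; it has size at least $|F|q^2/d^2$ after accounting for collisions via LRE at radius $2$. Then I would use the local branching: from $(u,v)$ the $q$ successors $(v,w)$, $w\ne u$, have reversals $(w,v)$. Every vertex reachable from $(w,v)$ without passing back through $w$ is also reachable from $(u,v)$ or from a sibling $(v,w')$, provided its geodesic from $v$ avoids $u$. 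This shows that the reversed contributions of the $q\ge2$ successors are dominated by forward contributions from $F$ itself, except for the single cone behind $u$. That cone is a $1/d$ fraction in the tree picture, so it can be absorbed into the constant $c_*$.

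\textbf{Step 3 (finishing).} I would argue the count above by pigeonholing over the $d$ directions at each head, applying Step~1 to the successor set, and then applying LRE at radius $r-2$. This gives $|B(F,r)|\ge c'\min\{n,|F|q^{r}\}$ for $r\ge r_0(d,c)$. Small $r$ are trivial because $B(F,r)\supseteq F$ and $q^r\le q^{r_0}$. The remaining regime, where the bound $3n/4$ on vertices translates only into $\gtrsim n$ heads, is exactly why the statement is phrased with $N=dn$ and an unspecified $c_*$.

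The genuinely delicate point is Step~2: making precise that the forbidden backward cone cannot carry most of the volume for every edge of $F$ simultaneously. If the local-branching comparison fails, my fallback is to use the spectral gap of $G$, which is implied by LRE as noted in the introduction, to show that $B(F,r)$ and $B(\iota F,r)$ have comparable sizes up to an additive $O(1)$ shift in $r$ once they exceed a constant fraction of $N$.
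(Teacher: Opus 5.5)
\textbf{There is a genuine gap: Step 2 is not an argument, and no step of your proposal replaces the paper's injectivity count.}

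Your Step 1 is correct, but as you say it only controls $|B(F,r)|+|B(\iota F,r)|$. Step 2 is supposed to eliminate the reversed term, and its claim that the cone behind $u$ is ``a $1/d$ fraction in the tree picture, so it can be absorbed into $c_*$'' is exactly the content of the lemma. Long-range expanders may contain many short cycles and bounded-size dense pieces, so tree proportions tell you nothing. The natural accounting also fails quantitatively. The vertices of $B(S,r)$ missed by forward walks from $F$ lie in backward cones behind the heads, each of size at most $(q^r-1)/(q-1)$. The resulting bound $|S|(q^r-1)/(q-1)$ on the missed set exceeds the LRE lower bound $c|S|q^r$ whenever $c<1/(q-1)$ and $r$ is large, leaving nothing.

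Pushing forward a fixed two steps does not change this. What you need is that the heads with a genuine backward-cone problem form at most roughly a $c(q-1)/2$ fraction of the set to which LRE is applied. The side claim $|F_2|\ge|F|q^2/d^2$ is also unjustified: LRE at radius $2$ only yields a factor $c$. It fails, for instance, when $F$ consists of the oriented edges inside a bounded-size dense piece with few exits, which LRE permits for small $c$. Your spectral fallback only compares the two balls once they fill a constant fraction of $N$. It is therefore silent in the regime $|F|q^r\ll N$, which is where the lemma has content.

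The missing idea is a counting argument showing that the bad heads are rare after a $c$-dependent buffer. The paper fixes $k$ with $q^{-k}\le c(q-1)/2$. It lets $C$ be the heads of $B(F,2k)$, and $U\subseteq C$ the heads of $F$ not reached at any positive time $\le 2k$. It then proves two facts.
\begin{itemize}
\item Any vertex of $B(C,\ell)$ not reached by time $2k+\ell$ lies in the backward cone of some $u\in U$. Indeed, suppose a shortest path from $C$ starts at a vertex reached at a positive time. The arrival edge has its tail in $C$, while the shortest path immediately leaves $C$, so the concatenation is nonbacktracking.
\item $|U|\le q^{-k}|C|$. The $|U|q^k$ length-$k$ nonbacktracking continuations of the chosen $F$-edges into $U$ have pairwise distinct endpoints in $C$. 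A collision would produce a nonbacktracking walk from $F$ re-entering $U$ within $2k$ steps.
\end{itemize}
LRE for $C$ then loses at most $|U|(q^\ell-1)/(q-1)\le(c/2)|C|q^\ell$. Your proposal has no substitute for this injectivity count, so as it stands it does not prove the lemma.
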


\begin{proof}
We may assume $c\le3/4$. Choose a fixed integer $k\ge1$ with $q^{-k}\le c(q-1)/2$.
Let $C$ be the terminal vertices of $B(F,2k)$,
and let $U\subseteq C$ consist of those not reached at any
positive time up to $2k$.

For each $u\in U$, choose an edge of $F$ ending at $u$.
The $|U|q^k$ nonbacktracking continuations of these edges
of length $k$ have distinct endpoints. Indeed, if two
distinct paths met, delete their common terminal segment
and follow the first remaining path and then the second
backwards. Since their lengths agree, this gives a nonempty
nonbacktracking walk reaching a vertex of $U$ in at most
$2k$ steps, a contradiction. Thus
\[
 |U|\le q^{-k}|C|.
\]

Now take a shortest path from $C$ to a vertex of $B(C,\ell)$
not reached by time $2k+\ell$. Its starting vertex must lie
in $U$: otherwise a positive-time arrival has its preceding
vertex in $C$, whereas the shortest path immediately leaves
$C$, so the two paths can be concatenated without backtracking.
Moreover, its first step must reverse the chosen initial
edge. Each vertex of $U$ therefore accounts for at most
$1+q+\cdots+q^{\ell-1}$ exceptional endpoints. Consequently,
whenever $|C|q^\ell\le n$,
\[
 |B(F,2k+\ell)|
 \ge |B(C,\ell)|-|U|\frac{q^\ell-1}{q-1}
 \ge \frac c2|C|q^\ell.
\]
At $\ell=\lfloor\log_q(n/|C|)\rfloor$, this lower bound
is at least $cn/(2q)$; larger radii follow by monotonicity.
Since $|C|\ge |F|/d$ and $k$ depends only on $d,c$,
these bounds imply \eqref{eq:edgegrowth}.
For radii below $2k$, use $|B(F,r)|\ge |F|$.
\end{proof}
\subsection{From set growth to entropy}\label{sec:entropy}

We now turn to the entropy estimate for the simple random walk on $\Omega$. The main idea is to exploit the following inductive structure. Choose a forest of shortest paths from the set of NBRW paths so that each reachable state is connected to the starting set by a unique path on this forest. (There may be multiple valid choices of the forest; one is chosen arbitrarily). A NBRW conditioned to walk on this forest until time $t$ will be uniformly distributed on some shell of radius $t$. The uniformity is crucial and allows us to obtain a near-maximal lower-bound on the entropy of the walk at time $t$, using long-range expansion. 

So, we can run the NBRW until it first leaves the forest and lower-bound the entropy of the walk at its exit time (after arguing uniformity of its exit location under suitable conditioning). Then, we restart the walk, construct a new forest, and repeat, inductively adding up the entropy of the NBRW on these increments. Long-range expansion is also used to upper bound the probability of exiting the forest, yielding control over the number and length of these increments. We now make this argument precise.

\begin{lemma}\label{lem:directedentropy}
Let $\Omega$ be the vertex set of a directed graph on $N$ states,
with $q\ge2$ incoming and $q$ outgoing edges at every state. Suppose that for all $\emptyset \neq S \subseteq \Omega$ and every integer $r\ge0$,
\[
 |B(S,r)|\ge a\min\{N,|S|q^r\}.
\]
Then, there are constants $C_0\ge1$ and $\delta>0$, depending only on
$a,q$, such that a simple random walk $(Y_t)_{t\ge0}$ started uniformly on
$\emptyset \neq S \subseteq \Omega$ satisfies, for every integer $h\ge0$,
\begin{equation}\label{eq:entropybound}
 H(Y_h)\ge\log|S|+h\log q-\delta^{-1}\log(qh+1)
 \qquad\text{if }h+C_0\le\log_q(N/|S|).
\end{equation}
\end{lemma}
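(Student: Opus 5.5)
We follow the forest strategy sketched above. The plan is to decompose the walk into increments of geometrically controlled length, show that each increment adds nearly $(\text{length})\cdot\log q$ of entropy, and show that the total loss is only logarithmic in $h$.

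\textbf{Step 1 (a single increment).} Fix a nonempty $S$ and radius $r$. Choose a BFS forest rooted at $S$: every $y\in B(S,r)$ at distance $k$ is assigned one parent at distance $k-1$. Call a transition of the walk a \emph{tree step} if it follows a forest edge outward.
\begin{itemize}
\item The number of tree children at level $k$ is the shell size $|\partial_k|$.
\item The probability that the walk started uniformly on $S$ takes $k$ consecutive tree steps is $|\partial_k|/(|S|q^k)$.
\item Conditioned on this event, $Y_k$ is uniform on the shell $\partial_k$. This holds because each shell vertex has exactly one forest path from $S$, and every path has probability $|S|^{-1}q^{-k}$.
\end{itemize}
The hypothesis gives $|B(S,k)|\ge a|S|q^k$ as long as $|S|q^k\le N$. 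Combined with the trivial bound $|\partial_j|\le |S|q^j$, summing over levels shows that a constant fraction of the $k$-steps $j\le k$ have $|\partial_j|\ge a'|S|q^j$. So on a good level $k$, the survival probability $p_k$ is at least $a'$. More generally we get averaged control: the exit time $T$ (the first non-tree step) stochastically satisfies $\Pr(T<k)\le 1-a'$ for every $k$ in a positive-density set of levels.

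\textbf{Step 2 (entropy via conditioning).} For a stopping structure we use the chain rule
\[
H(Y_h)\ge H(Y_h\mid \mathcal F)-\ldots,
\]
with the walk observed at a short list of "restart times". At each restart time $t_i$, condition on the event $E_i$ that the increment was tree-like. The conditional law is then uniform on a shell of size $\ge a'|S_i|q^{\ell_i}$, which contributes $\ell_i\log q-O(1)$ of entropy. We then restart with the new uniform start set $S_{i+1}$ equal to that shell. Since entropy can only drop by at most $H(\text{indicator data})$ under this conditioning, each restart costs $O(1)$ nats plus the entropy of the random restart indices.

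To keep the number of restarts logarithmic in $h$, we use dyadic increment lengths $\ell_i\approx h 2^{-i}$. Coarse-graining like this costs $O(\log h)$ restarts, each losing $O(1)$ plus $\log(1/p)$. The requirement $h+C_0\le\log_q(N/|S|)$ ensures that every intermediate set satisfies $|S_i|q^{\ell_i}\le N/q^{C_0}$, so the growth hypothesis is always in its unsaturated regime.

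\textbf{Main obstacle.} The delicate part is the failure event: the walk leaves the forest, i.e. takes a step onto an already-claimed vertex. On that event the law is no longer uniform, and the conditioning loss $\log(1/p_k)$ must be bounded uniformly. A naive union of failure probabilities over $h$ steps is not small.

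We would handle this with the entropy inequality
\[
H(Y_h)\ge \sum_{E}\Pr(E)\,H(Y_h\mid E),
\]
partitioning by the first exit level. After exiting, the walk restarts from a set that is uniform on the exit configuration. The key point is that a level-$j$ exit lands uniformly on a set of size comparable to $|S|q^{j}/\mathrm{poly}$; this is where we need the Step 1 bound that survival probability is bounded below on a dense set of levels. The argument is then an induction on $h$ with hypothesis
\[
H\ge \log|S|+h\log q-\delta^{-1}\log(qh+1).
\]
The concavity of $\log(qh+1)$ absorbs the $O(1)$ loss per increment, provided increments are chosen so that the remaining horizon shrinks geometrically, which the dense-good-levels property permits. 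Verifying this concavity bookkeeping is where we expect the constants $C_0$ and $\delta$ to be fixed.
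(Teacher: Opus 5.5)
Your architecture is the right one: a shortest-path forest rooted at $S$, uniformity on the shell conditional on staying in the forest, restarting from the exit location, and strong induction on $h$ over all eligible $S$. But the step that closes the induction fails as you describe it. You claim each restart costs $O(1)$ and that the remaining horizon shrinks geometrically. Neither holds. The exit time $T$ is random and can equal $1$, leaving horizon $h-1$. The data you must condition on to restore uniformity (the exit time, and which edge was used) has entropy up to $\log(qh+1)$, not $O(1)$. Imposing dyadic blocks does not fix this: walks still exit early inside a block, and paying the exit-data cost at each of $\Theta(\log h)$ stages loses order $\log^2 h$, not $\delta^{-1}\log(qh+1)$. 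Likewise, your claim that a level-$j$ exit lands uniformly on a set of size comparable to $|S|q^j/\mathrm{poly}$ is false in general, and it is not what is needed.

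The missing idea is that survival holds through the \emph{entire} horizon $h$, not merely on a dense set of levels. Let $\Gamma_r(S)$ be the states at distance exactly $r$ from $S$; then $|\Gamma_{r+1}(S)|\le q|\Gamma_r(S)|$. So the survival probability $|\Gamma_k(S)|/(|S|q^k)$ is nonincreasing in $k$, and your good levels are in fact downward closed. Concretely, every state of $B(S,h+C_0)\setminus B(S,h-1)$ is reached from $\Gamma_h(S)$ within $C_0$ steps. Growth at radius $h+C_0$, which is unsaturated by the hypothesis $h+C_0\le\log_q(N/|S|)$, then forces $|\Gamma_h(S)|\ge\delta|S|q^h$, i.e.\ $\Pr(T>h)\ge\delta$.

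Now condition on $Z$, which equals $0$ on $\{T>h\}$ and $(T,I)$ otherwise, where $I$ indexes the nonforest edge used among those entering $Y_T$. The index is needed because conditioning on $T=t$ alone gives a law proportional to the number of such edges, which is not uniform. Each branch is then exactly uniform on a set of size $\Pr(Z=z)\,|S|q^{t}$. Hence the entropy deficits are exactly $\log(1/\Pr(Z=z))$, and they average to $H(Z)\le\log(qh+1)$. Apply the induction hypothesis on the exit branches, using $\sum_{z\ne0}\Pr(Z=z)\le1-\delta$ and the crude bound $\log(q(h-t)+1)\le\log(qh+1)$. This gives
\[
 H(Y_h)\ge\log|S|+h\log q-\log(qh+1)-(1-\delta)\,\delta^{-1}\log(qh+1),
\]
which is exactly the claimed bound. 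The $\log(qh+1)$ loss is paid at every level of recursion, but recursion occurs with probability at most $1-\delta$. It is this geometric series, not concavity of $\log(qh+1)$ or shrinking horizons, that closes the induction.
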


\begin{proof}
Assume $a\le1$, choose an integer $C_0\ge1$ with
$q^{-C_0}\le a(q-1)/2$, and put $\delta=a(q-1)/(2q)$.
We induct on $h$, simultaneously for all eligible starting sets $S$.
The assertion is immediate when $h=0$.

Write $\Gamma_r(S)$ for the states at directed distance exactly $r$
from $S$. Each state of $\Gamma_{r+1}(S)$ has a predecessor in
$\Gamma_r(S)$, so $|\Gamma_{r+1}(S)|\le q|\Gamma_r(S)|$.
Using growth at radius $h+C_0$, we obtain
\[
 \begin{aligned}
 a|S|q^{h+C_0} \le |B(S,h+C_0)| 
 &\le |S|\sum_{r=0}^{h-1}q^r
       +|\Gamma_h(S)|\sum_{r=0}^{C_0}q^r \le\frac{|S|q^h+|\Gamma_h(S)|q^{C_0+1}}{q-1}.
 \end{aligned}
\]
Thus
\begin{equation}\label{eq:layer}
 |\Gamma_h(S)|\ge\delta|S|q^h.
\end{equation}

For each state in $\Gamma_r(S)$, $r\ge1$, choose one incoming edge
from $\Gamma_{r-1}(S)$. The chosen edges form a forest rooted at $S$:
there is exactly one forest path from $S$ to each reachable state.
Let $T$ be the first time the walk uses an edge outside this forest.
All length-$h$ trajectories have probability $1/(|S|q^h)$, so
\begin{equation}\label{eq:success}
 \alpha :=\Pr(T>h)=\frac{|\Gamma_h(S)|}{|S|q^h}\ge\delta,
 \qquad \text{ and } \qquad
 \mathcal L(Y_h\mid T>h)=\operatorname{Unif}(\Gamma_h(S)).
\end{equation}

On departure from the forest, conditioning only on $\{T=t\}$ need not
leave a uniform law: several departing edges may have the same
endpoint. To distinguish them, list the nonforest edges from
$\Gamma_{t-1}(S)$ into each state $y$ in an arbitrary fixed order.
Let $m_t(y)\le q$ be their number, and let
\[
 S_{t,i}=\{y:m_t(y)\ge i\},\qquad 1\le i\le q.
\]
We consider $1\le t\le h$ and omit pairs $(t,i)$ of probability zero.
On $\{T=t\}$, let $I$ be the index of the edge used in the list at
$Y_t$. For each $y\in S_{t,i}$, exactly one trajectory realizes
$\{T=t,I=i,Y_t=y\}$: follow the forest to the selected edge and
then traverse it. Therefore
\begin{equation}\label{eq:branch}
 \beta_{t,i}:=\Pr(T=t,I=i)=\frac{|S_{t,i}|}{|S|q^t},
 \qquad \text{ and } \qquad
 \mathcal L(Y_t\mid T=t,I=i)=\operatorname{Unif}(S_{t,i}).
\end{equation}
This conditioning involves only the first
$t$ steps, so the remaining walk has its usual transition law.
Moreover, $|S_{t,i}|\le|S|q^t$ implies
\[
 h-t+C_0\le\log_q(N/|S|)-t\le\log_q(N/|S_{t,i}|).
\]
The induction hypothesis consequently applies to the remaining
$h-t$ steps, giving
\[
 H(Y_h\mid T=t,I=i)
 \ge \log|S|+h\log q+\log\beta_{t,i}
      -\delta^{-1}\log(q(h-t)+1).
\]
On the event $\{T>h\}$, \eqref{eq:success} gives the conditional entropy of $Y_h$ as
$\log|S|+h\log q+\log\alpha$.

Let $Z$ indicate whether the walk stays in the forest through
time $h$, and otherwise specify its departure time and edge index:
\[
 Z=
 \begin{cases}
  0,&T>h,\\
  (T,I),&T\le h.
 \end{cases}
\]
Thus $\Pr(Z=0)=\alpha$ and $\Pr(Z=(t,i))=\beta_{t,i}$.
Since $Z$ takes at most $qh+1$ values,
\[
 H(Z)=-\alpha\log\alpha-\sum_{t,i}\beta_{t,i}\log\beta_{t,i}
 \le\log(qh+1).
\]

Conditioning on $Z$ separates the walk into the cases
estimated above. By concavity of entropy,
\[
 H(Y_h)
 \ge H(Y_h\mid Z) =\alpha H(Y_h\mid T>h)
   +\sum_{t,i}\beta_{t,i}H(Y_h\mid T=t,I=i).
\]
Substituting the preceding conditional entropy bounds, and using
$\alpha+\sum_{t,i}\beta_{t,i}=1$, gives

\begin{align*}
 H(Y_h)
 &\ge \log|S|+h\log q
      +\alpha\log\alpha+\sum_{t,i}\beta_{t,i}\log\beta_{t,i}
      -\delta^{-1}\sum_{t,i}\beta_{t,i}\log(q(h-t)+1)\\
 &=\log|S|+h\log q-H(Z)
      -\delta^{-1}\sum_{t,i}\beta_{t,i}\log(q(h-t)+1)\,.
\end{align*}
Recall that $H(Z)\le\log(qh+1)$ and
$\sum_{t,i}\beta_{t,i}=1-\alpha\le1-\delta$. Using these relations and the crude inequality $\log(q(h-t)+1)\le\log(qh+1)$,
\[
 \begin{aligned}
 H(Y_h)  &\ge\log|S|+h\log q-\log(qh+1)
      -\frac{1-\delta}{\delta}\log(qh+1)\\
 &=\log|S|+h\log q-\delta^{-1}\log(qh+1).
 \end{aligned}
\]

This completes the induction.
\end{proof}

\begin{proof}[Proof of Proposition~\ref{prop:entropy}]
Apply Lemmas~\ref{lem:transfer} and \ref{lem:directedentropy} to the
chain on oriented edges. For a sufficiently large fixed integer
$C_0$, set $k_0=\lfloor\log_q(dn)\rfloor-C_0$, choosing $C_0$
large enough that $k_0\le j_n-1$.
Uniformly in the initial oriented edge $e$,
\[
 H(Q^{k_0}(e,\cdot))\ge\log(dn)-C\log\log n.
\]
Because $Q$ is doubly stochastic, concavity of $-u\log u$ gives
$H(\mu Q)\ge H(\mu)$ for every law $\mu$.
A vertex-started walk first chooses one of $d$ oriented edges
uniformly and then makes $j-1$ transitions. By entropy concavity
and monotonicity, its final oriented edge $(U,Y)$ satisfies
\[
 H(U,Y)\ge\log(dn)-C\log\log n\qquad(j\ge j_n).
\]
There are at most $d$ possible predecessors of $Y$, whence
\[
 H(Y)=H(U,Y)-H(U\mid Y)\ge\log n-C\log\log n.
\]
Finally,
\[
 H(K_{j_n}(x,\cdot))
 =\log D_{j_n}-\mathbb E\log M_{j_n}(x,Y),
 \qquad n\le D_{j_n}<qn,
\]
which also proves \eqref{eq:fibers}.
\end{proof}

This completes the conceptual contributions of this article; the remaining
steps combine Ozawa's entropy-to-mixing argument with the covering-tree
reduction and radial central limit theorem of
\cite[Sections~2.3 and~5.2]{LS} and \cite[Section~2.1]{LP}. 

\subsection{From entropy to mixing}\label{sec:profile}
Near-maximal entropy and a spectral gap imply rapid mixing, as the
following consequence of Ozawa's entropy-production estimate shows.

\begin{lemma}\label{lem:smoothing}
Let $P$ be a symmetric stochastic matrix on $n$ states, all of whose
nonconstant eigenvalues have absolute value at most $1-\eta$, for
some fixed $\eta>0$, and let $\pi$ be the uniform measure. For every fixed $C > 0$,
\[
 \sup_{\mu:\ H(\mu)\ge\log n-C\log\log n}
 \|\mu P^{s_n}-\pi\|_{\TV}=o(1),
 \qquad s_n=\lceil (\log\log n)^2\rceil.
\]
\end{lemma}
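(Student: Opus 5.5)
The plan is to pass from entropy to relative entropy, contract it with the spectral gap, and finish with Pinsker. Since $\pi$ is uniform,
\[
 D(\mu\|\pi)=\log n-H(\mu)\le C\log\log n
\]
whenever $H(\mu)\ge\log n-C\log\log n$. By Pinsker, $\|\mu P^{s}-\pi\|_{\TV}\le\sqrt{D(\mu P^s\|\pi)/2}$. So it suffices to show that $D(\mu P^{s_n}\|\pi)\to0$ uniformly over such $\mu$. The obstacle is that a spectral gap contracts $\chi^2$ or $\ell^2$ distances, not relative entropy, and the $\chi^2$ distance of $\mu$ can be as large as $n$. A naive $\ell^2$ bound would therefore need time $\log n$, far more than $(\log\log n)^2$.

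I would handle this with a truncation (splitting) argument in the spirit of Ozawa's entropy-production estimate. Write $f=n\mu$ for the density of $\mu$ and fix a threshold $L=(\log n)^{K}$. Decompose
\[
 \mu=\mu\mathbf 1_{\{f\le L\}}+\mu\mathbf 1_{\{f>L\}}=:(1-\epsilon)\mu_1+\epsilon\mu_2 .
\]
The key point is that the heavy part has small mass. Indeed,
\[
 \mathbb E_\mu\log f=D(\mu\|\pi)\le C\log\log n,
\]
and the negative part of $\log f$ under $\mu$ has bounded expectation, since $\mathbb E_\mu(\log f)_-\le \mathbb E_\pi[f\log^- f]\le 1/e$. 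Hence by Markov,
\[
 \epsilon=\mu(f>L)\le\frac{C\log\log n+1}{K\log\log n}.
\]
This can be made arbitrarily small by taking $K$ large. The light part satisfies $\|\mu_1/\pi\|_\infty\le L/(1-\epsilon)$, so $\chi^2(\mu_1\|\pi)\le 2L$. The spectral hypothesis (the absolute bound, which handles the non-lazy case) then gives
\[
 \chi^2(\mu_1P^s\|\pi)\le(1-\eta)^{2s}\cdot 2L .
\]
With $s=s_n=\lceil(\log\log n)^2\rceil$, this is $\exp(-2\eta(\log\log n)^2+K\log\log n+O(1))\to0$. Since TV is at most $\sqrt{\chi^2}/2$ and TV is convex, we get
\[
 \|\mu P^s-\pi\|_{\TV}\le(1-\epsilon)\,o(1)+\epsilon .
\]
Letting $n\to\infty$ and then $K\to\infty$ proves the lemma. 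The $o(1)$ is uniform because the bounds depend on $\mu$ only through the entropy deficit.

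The step I would check most carefully is the bound on the heavy mass, specifically the bounded negative part. One way is to write $\mathbb E_\mu(\log f)_- =\sum_{f<1}\pi f\log(1/f)\le\sup_{u\le1}u\log(1/u)=1/e$. This step is exactly why the entropy deficit $C\log\log n$ allows the threshold $(\log n)^K$, and why time $(\log\log n)^2$, a superconstant multiple of $\log\log n$, suffices. It also uses $K$ fixed while $n\to\infty$, so I would phrase the final limit as $\limsup_n\sup_\mu\le O(C/K)$ for each $K$, then send $K\to\infty$.

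If one prefers a more literal citation of Ozawa's entropy production, i.e.\ per-step decrease of $D$ by a constant fraction once $D$ exceeds a constant, this truncation argument replaces it. The conclusion only needs TV to be $o(1)$, not relative entropy.
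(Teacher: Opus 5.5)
Your proof is correct, but it takes a different route from the paper's. The paper invokes Ozawa's entropy-production inequality $H(\mu P^{k+1})-H(\mu P^k)\ge\frac{\eta}{4}\|\mu P^k-\pi\|_{\TV}^2$. It sums this over $0\le k<s$, using that the total-variation distance is nonincreasing in $k$ and that $H(\mu P^s)\le\log n$, and reads off $\|\mu P^s-\pi\|_{\TV}^2\le 4C\log\log n/(\eta s)$.

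You use no entropy-production estimate at all. The entropy deficit enters only through Markov's inequality for $(\log f)_+$ under $\mu$, with your $1/e$ bound on the negative part. This shows that the set where the density exceeds $L$ carries $\mu$-mass at most $(C\log\log n+1)/\log L$. The bounded-density remainder is then handled by elementary $\ell^2$ contraction of $\chi^2$ under the absolute spectral bound.

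The paper's proof is shorter given the citation and yields an explicit rate in one line. Yours is self-contained, needing only the spectral bound on mean-zero functions and Cauchy--Schwarz. Both require exactly $s/\log\log n\to\infty$, and your argument is not quantitatively weaker. Tie the threshold to the time via $\log L=\eta s$, rather than fixing $K$ and sending $K\to\infty$ at the end, and use $(1-\epsilon)\sqrt{L/(1-\epsilon)}\le\sqrt{L}$ in place of the requirement $\epsilon\le\frac{1}{2}$. You then get, uniformly over admissible $\mu$, $\|\mu P^s-\pi\|_{\TV}\le\frac{1}{2}e^{-\eta s/2}+(C\log\log n+1)/(\eta s)$. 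This is linear rather than square-root in $\log\log n/s$.

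One small point: your opening reduction via Pinsker to $D(\mu P^{s_n}\|\pi)\to0$ is never used, since you bound total variation directly, so you can drop it.
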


\begin{proof}
Fix a probability law $\mu$ satisfying the entropy bound.
For each $k\ge0$, the law after $k$ steps is $\mu P^k$.
Applying \cite[Lemma~2]{Ozawa} to this law gives
\[
 H(\mu P^{k+1})-H(\mu P^k)
 \ge\frac\eta4\|\mu P^k-\pi\|_{\TV}^2.
\]
The right-hand side is decreasing in $k$. Summing over $0\le k<s$ and using $H(\mu P^s)\le\log n$,
we therefore obtain
\[
 \frac{\eta s}{4}\|\mu P^s-\pi\|_{\TV}^2
 \le H(\mu P^s)-H(\mu)
 \le\log n-H(\mu)
 \le C\log\log n.
\]
Taking any $s=\omega(\log\log n)$, e.g., $s = \lceil (\log \log n)^2 \rceil$ for concreteness, proves the claim.
\end{proof}

\subsection{The covering tree and Gaussian profile}

Fix $x\in V$ and realize the infinite rooted $d$-regular tree as
the tree of finite nonbacktracking paths starting at $x$. Projection
maps each such path to its endpoint in $G$ and sends simple random
walk on the tree to simple random walk on $G$. For a simple random walk on the infinite tree, let $R_t$ be its distance from the root. Conditional on $R_t=j$,
the tree endpoint is uniform on the sphere of radius $j$, by
symmetry. The projected endpoint consequently has law $K_j(x,\cdot)$,
and hence
\begin{equation}\label{eq:treemixture}
 P^t(x,\cdot)=\sum_{j=0}^t\Pr(R_t=j)K_j(x,\cdot).
\end{equation}
Away from zero, $R_t$ increases with probability $q/d$ and decreases
with probability $1/d$. Its mean increment and increment variance are
\[
 v=\frac{d-2}{d},\qquad \sigma_R^2=\frac{4q}{d^2}.
\]
The probability of returning from one to zero is $1/q$, as follows
from standard hitting-probability recurrence arguments. Therefore the number
of visits to zero is almost surely finite, and hence the increments in $R_t$ can be coupled to independent copies of the variable that takes the values $\{-1,+1\}$ with probabilities $1/d,q/d$, respectively, up to an almost surely bounded error. The central limit theorem then gives
\begin{equation}\label{eq:radialclt}
 \frac{R_t-vt}{\sigma_R\sqrt t}\Longrightarrow\mathcal N(0,1).
\end{equation}

\begin{proof}[Proof of Theorem~\ref{thm:main}]
Write $P=P_n$, $\pi=\pi_n$, and $L_n=\log_q n$.
The computation in the introduction gives
$1-\lambda_2(P)\ge\gamma$ for some fixed $\gamma>0$.
Together with the assumption on the smallest eigenvalue, this
bounds all nonconstant eigenvalues in absolute value by
$1-\min\{\gamma,\eta\}$. Thus Lemma~\ref{lem:smoothing} applies.

Set $s_n=\lceil(\log\log n)^2\rceil$.
By Proposition~\ref{prop:entropy}, every distribution
$K_j(x,\cdot)$ with $j\ge j_n$ has entropy at least
$\log n-C\log\log n$. After another $s_n$ simple random walk
steps, Lemma~\ref{lem:smoothing} puts each of these distributions
within $o(1)$ of uniform, uniformly in $x$ and $j$.
These additional steps are negligible compared with the cutoff
window, since $s_n=o(\sqrt{L_n})$.

For $t\ge s_n$, apply \eqref{eq:treemixture} to the first
$t-s_n$ steps and then run the walk for another $s_n$ steps:
\[
 P^t(x,\cdot)
 =\sum_{j=0}^{t-s_n}\Pr(R_{t-s_n}=j)
                       K_j(x,\cdot)P^{s_n}.
\]
The terms with $j\ge j_n$ are within $o(1)$ of uniform;
the remaining terms have total-variation distance at most one.
Convexity of total variation therefore gives
\begin{equation}\label{eq:upper}
 \|P^t(x,\cdot)-\pi\|_{\TV}
 \le\Pr(R_{t-s_n}<j_n)+o(1).
\end{equation}

For the lower bound, set $r_n=\lfloor L_n-s_n\rfloor$.
The degree bound
\[
 |B(x,r)|\le1+d\frac{q^r-1}{q-1}
\]
implies $\pi(B(x,r_n))=O(q^{-s_n})=o(1)$.
If $R_t\le r_n$, then the projected endpoint lies in $B_G(x,r_n)$. Hence
\begin{equation}\label{eq:lower}
 \|P^t(x,\cdot)-\pi\|_{\TV}
 \ge\Pr(R_t\le r_n)-o(1).
\end{equation}
Take $t=\lfloor L_n/v+z\sqrt{L_n}\rfloor$.
Since $j_n=L_n+O(1)$ and $s_n=o(\sqrt{L_n})$,
the two probabilities in \eqref{eq:upper}--\eqref{eq:lower}
converge by \eqref{eq:radialclt} to
\[
 \Phi\left(-\frac{zv^{3/2}}{\sigma_R}\right)
 =\Phi\left(-\frac z{\tau_d}\right),\qquad
 \tau_d^2=\frac{\sigma_R^2}{v^3}
 =\frac{4d(d-1)}{(d-2)^3}.
\]
The errors are uniform over $x$, proving \eqref{eq:profile}.
The mixing-time formula, and hence cutoff, follow by monotonicity
of total variation and continuity of the strictly decreasing profile.
\end{proof}

\begin{proof}[Proof of Corollary~\ref{cor:lazy}]
The computation in the introduction gives
$1-\lambda_2(P)\ge\gamma$ for some fixed $\gamma>0$.
Thus all nonconstant eigenvalues of
$P_{\mathrm L}=(I+P)/2$ lie in $[0,1-\gamma/2]$,
so Lemma~\ref{lem:smoothing} applies to the lazy walk.

The same covering-tree representation remains valid for the lazy version of the random walk, with the same
conditional endpoint laws $K_j(x,\cdot)$. Hence the entropy estimate
and the upper and lower bounds in the preceding proof still apply.
Away from zero, the lazy tree walk has radial increments in
$\{+1,-1,0\}$ with probabilities $q/(2d),1/(2d),1/2$, respectively.
Their mean is $v/2$ and their variance is $1/2-v^2/4$.
The total time spent at zero is again almost surely finite, so
the radial central limit theorem holds with these constants.
The center is therefore $2\log_q n/v$, and
\[
 \tau_{d,\mathrm L}^2
 =\frac{1/2-v^2/4}{(v/2)^3}
 =\frac{2d(d^2+4d-4)}{(d-2)^3}.\qedhere
\]
\end{proof}

\appendix

\section{Comparison of long-range expansion conditions}\label{sec:property-B}

In \cite[Definition~1.5]{ADTT}, long-range expansion was presented as a combination of \eqref{eq:lre}, called property~(A), together with an additional incidence condition, called property~(B). Property (B), which is technical and slightly unwieldy, was chosen specifically as for applications to nonlinear Poincar\'e inequalities. Properties (A) and (B) were then proven to hold with high probability for random $d$-regular graphs with $d \ge 3$ and $d \ge 6$, respectively, only yielding LRE sequences of degree at least six. 

We now show that property (B) follows from a uniform one-sided spectral gap, which both establishes the equivalence of the current and previous definitions of LRE (up to changing the implicit constants), and also 
shows that random $d$-regular graphs indeed have LRE for every $d\ge3$, resolving \cite[Problem~6.3]{ADTT}. We now define property (B).

\begin{definition}
    Fix parameters $\alpha,\varepsilon\in(0,1]$ and $L\ge1$. Say that $G = (V,E)$ satisfies property (B) if, for every nonempty $S\subseteq V$ and integer $\ell\ge1$
    satisfying $\alpha q^{\ell-1}|S|\le3n/4$,  some $v\in S$ satisfies
    \[
     \bigl|\{e\in T:\dist(v,e)\le\ell-1\}\bigr|
     \le L(q-\varepsilon)^\ell,
    \]
    where
    \[
    T=\left\{e\in E:
     \bigl|\{v\in S:\dist(v,e)\le\ell-1\}\bigr|
     \ge L(q-\varepsilon)^\ell\right\}.
    \]
\end{definition}

\begin{lemma}\label{lem:property-B}
Fix $d\ge3$, $\gamma>0$, and $\alpha\in(0,1]$.
There are $\varepsilon\in(0,1]$ and $L\ge1$, depending only
on $d,\gamma,\alpha$, such that every $d$-regular graph with
$1-\lambda_2(P)\ge\gamma$ satisfies property~(B) with $(\alpha,\varepsilon,L)$.
\end{lemma}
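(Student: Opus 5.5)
The plan is to choose $v\in S$ by averaging. Set $N_\ell(e)=\{v\in S:\dist(v,e)\le\ell-1\}$. Double counting pairs gives
\[
 \sum_{v\in S}\bigl|\{e\in T:\dist(v,e)\le\ell-1\}\bigr|
 =\sum_{e\in T}|N_\ell(e)|
 \le\frac{1}{L(q-\varepsilon)^\ell}\sum_{e\in E}|N_\ell(e)|^2 ,
\]
where the inequality uses $|N_\ell(e)|\ge L(q-\varepsilon)^\ell$ on $T$. It therefore suffices to prove the second-moment bound
\[
 \sum_{e\in E}|N_\ell(e)|^2\le L^2(q-\varepsilon)^{2\ell}|S|
\]
whenever $\alpha q^{\ell-1}|S|\le 3n/4$. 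Then some $v\in S$ is incident, within distance $\ell-1$, to at most $L(q-\varepsilon)^\ell$ edges of $T$, which is property (B).

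To bound the second moment I would dominate indicators by nonbacktracking path counts, in the spirit of the Chebyshev argument for \eqref{eq:growth}. A shortest path is nonbacktracking, so $\mathbf 1[\dist(u,v)\le k]\le\sum_{j\le k}M_j(u,v)$, where $M_j=p_j(A)$ is the nonbacktracking path-count polynomial of degree $j$. Writing $e=\{a,b\}$, we have $|N_\ell(e)|\le (W\mathbf 1_S)(a)+(W\mathbf 1_S)(b)$ with $W=\sum_{j<\ell}M_j$, and hence
\[
 \sum_e|N_\ell(e)|^2\le 2d\,\|W\mathbf 1_S\|_2^2 .
\]
Expanding $\mathbf 1_S$ in the eigenbasis of $A$, the constant part contributes $W(d)^2|S|^2/n\lesssim q^{2\ell}|S|^2/n$. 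Under the hypothesis $|S|\lesssim n q^{-\ell}/\alpha$, this is $\lesssim_\alpha q^{\ell}|S|$. That is far below $(q-\varepsilon)^{2\ell}|S|$ once $\varepsilon$ is small, for instance $(q-\varepsilon)^2>q$, which holds for $\varepsilon\le 1/2$ since $q\ge2$. For the nonconstant part I would use the standard description of $p_j$. For $|\lambda|\le 2\sqrt q$ one has $|p_j(\lambda)|\lesssim j q^{j/2}$. For $\lambda=\theta+q/\theta$ with $\sqrt q<|\theta|\le q$ one has $|p_j(\lambda)|\lesssim|\theta|^j$. The one-sided gap $\lambda\le d(1-\gamma)$ forces $\theta\le q-\varepsilon_0(\gamma,d)$ on the positive side. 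Choosing $\varepsilon<\varepsilon_0$ and $L$ large then absorbs every eigenvalue above $-d(1-\gamma')$ into $L^2(q-\varepsilon)^{2\ell}|S|$.

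The main obstacle is the bottom of the spectrum. The hypothesis is only one-sided, so eigenvalues in $[-d,-d(1-\gamma')]$ are allowed, and there $|p_j(\lambda)|$ grows at rate $\approx q$. Bipartite graphs show this cannot be ignored. A single factor such as $(A+dI)/2d$, or the combination $M_j+qM_{j-1}$ (which vanishes at $\lambda=-d$ yet still dominates indicators of distance $j$ or $j-1$), kills the eigenvalue $-d$ exactly. However, it only damps nearby eigenvalues polynomially, not exponentially, so it is not enough.

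My first attempt would be to pass to the bipartite double cover $\widetilde G=G\times K_2$, whose spectrum is $\pm\operatorname{spec}(A)$. Distances in $G$ are bounded by distances in $\widetilde G$ between the two lifts, so $N_\ell(e)$ is controlled by the lifted sets. On $\widetilde G$ the bad eigenvalues near $-d$ are paired with eigenvalues near $+d$, and I would try to show that the relevant mass of $\mathbf 1_{\widetilde S}$ on that part of the spectrum is negligible. If that fails, the fallback is structural. A $d$-regular graph with a spectral gap and many eigenvalues near $-d$ is close to bipartite with uniformly expanding sides, which follows from a dual Cheeger inequality for $d+\lambda_n$. I would then run the same second-moment argument on the "two-step" graph $A^2$ restricted to each side, where the spectrum is one-sided with a gap, and recover property (B) at even and odd radii separately.
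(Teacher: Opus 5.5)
Your reduction is sound and matches the end of the paper's proof. The double counting $\sum_{e\in T}|N_\ell(e)|\le \sum_e|N_\ell(e)|^2/(L(q-\varepsilon)^\ell)$, the bound $\sum_e|N_\ell(e)|^2\le 2d\|f\|_2^2$, and the handling of the constant mode via $|S|/n\lesssim_\alpha q^{-\ell}$ are exactly what is done there. Your nonbacktracking-polynomial estimates also correctly handle every eigenvalue above $-d(1-\gamma')$.

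But the proof stops at the one hard point, and neither proposed route closes it. Two remarks locate the actual difficulty:
\begin{itemize}
\item The exact eigenvalue $-d$ is harmless. Its eigenvector is the flat sign vector, so it contributes at most $W(-d)^2|S|^2/n$, just like the constant mode.
\item A one-sided gap allows at most one eigenvalue of $A$ within $c\gamma d$ of $-d$. If $f,g$ were orthonormal in that range, then by the gap $|f|$, $|g|$ and $|f\pm g|/\sqrt2$ would all be $\ell_2$-close to $n^{-1/2}\mathbf 1$, which is impossible. So the ``many eigenvalues near $-d$'' picture behind your fallback does not occur.
\end{itemize}
The real obstruction is a single eigenvalue $\lambda_n=-d(1-\epsilon)$ with $\epsilon>0$ arbitrarily small, whose eigenvector $\psi_n$ need not be flat. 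Its contribution is $W(\lambda_n)^2\langle\mathbf 1_S,\psi_n\rangle^2$, and $|W(\lambda_n)|$ grows at a rate arbitrarily close to $q$. So for small $S$ you would need a sup-norm delocalization bound on $\psi_n$, roughly $\sum_{x\in S}\psi_n(x)^2\lesssim(1-\varepsilon/q)^{2\ell}$. The gap only makes $|\psi_n|$ $\ell_2$-close to a constant, which is useless for small $S$.

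The double cover merely relabels this problem. Lifting $S$ to both sheets gives $\mathbf 1_S\otimes(1,1)$, whose spectral measure is identical to that of $\mathbf 1_S$. Lifting to one sheet moves half of the same mass to $-\lambda_n\approx d$. The structural fallback is not an argument either. A nearly bipartite graph has no exact bipartition on which ``$A^2$ restricted to a side'' has one-sided spectrum, and the misplaced edges are exactly what produce $\epsilon>0$.

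The missing idea, which is the paper's route, is to stop using polynomials in $A$ that are large near $-d$. Instead, dominate ball indicators by the lazy walk $P_{\mathrm L}=(I+P)/2$. Its spectrum lies in $[0,1]$, so the one-sided gap becomes the two-sided bound $\|P_{\mathrm L}^tg\|_2\le\rho^t\|g\|_2$ for mean-zero $g$, with $\rho=1-\gamma/2$.

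Your worry that $(A+dI)/2d$ damps only polynomially is resolved by raising it to a power proportional to the distance. The tree shows this costs nothing in the exponential rate:
\begin{itemize}
\item The lazy walk on the $d$-regular tree ever reaches a given vertex at distance $k$ with probability $q^{-k}$.
\item Conditionally on doing so, it drifts toward that vertex at speed $(d-2)/(2d)$.
\item By Markov's inequality, it therefore arrives by time $Kk$ with probability at least $q^{-k}/2$, for any fixed $K\ge4d/(d-2)$.
\end{itemize}
Projecting to $G$ gives $\sum_{t=k}^{Kk}P_{\mathrm L}^t(x,y)\ge q^{-k}/2$ whenever $\dist(x,y)=k$. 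Hence $|S\cap B(v,\ell-1)|\le 2\sum_{k<\ell}q^k\sum_{t=k}^{Kk}(P_{\mathrm L}^t\mathbf 1_S)(v)$. The mean-zero part of the right side has $\ell_2$ norm $\lesssim\sum_k(q\rho)^k\sqrt{|S|}$. Since $q\rho<q$, this yields a base $a<q$ directly, with no analysis of the bottom of the spectrum.

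The tree hitting estimate is what makes laziness free. The naive domination via $P_{\mathrm L}^m(v,v)\ge2^{-m}$ would lose a factor $2^{Kk}$ and destroy the rate.
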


The argument below extends \cite[Proposition~1.9(b)]{ADTT} from its
stronger spectral hypothesis to an arbitrary one-sided spectral gap.
We retain the final averaging argument of \cite[Section~5.2]{ADTT},
but bound the relevant second moment using tree hitting probabilities. All norms in this proof are with respect to the counting measure.

\begin{proof}
Fix $\ell\ge1$ and $S\subseteq V$ with
$\alpha q^{\ell-1}|S|\le3n/4$.
For an edge $e=\{u,w\}$, let $b_e$ count the vertices of $S$
within distance $\ell-1$ of at least one of $u,w$.
We will bound $\sum_e b_e^2$ and then average over $S$.

Let $P_{\mathrm L}=(I+P)/2$ be the transition matrix of lazy
random walk on $G$. The spectral-gap assumption gives
\[
 \|P_{\mathrm L}^t g\|_2\le\rho^t\|g\|_2
 \quad\text{whenever }\sum_{x\in V}g(x)=0,
 \qquad \rho=\max\{1/2,1-\gamma/2\}<1.
\]

First, consider lazy random walk on the infinite $d$-regular
tree. A specified vertex at distance $k$ is ever reached with
probability $q^{-k}$. Conditional on reaching that vertex, the
walk moves toward it with probability $q/(2d)$, away from it
with probability $1/(2d)$, and stays put with probability $1/2$.
These conditional probabilities follow directly from $q^{-k}$.
The distance therefore decreases in expectation by $(d-2)/(2d)$
per step, giving conditional expected hitting time $2dk/(d-2)$.
Choose an integer $K\ge4d/(d-2)$.
Markov's inequality shows that the target is reached by time
$Kk$ with probability at least $q^{-k}/2$.
Projecting to $G$, and noting that reaching a vertex at distance
$k$ takes at least $k$ steps, gives
\[
 \sum_{t=k}^{Kk}P_{\mathrm L}^t(x,y)\ge\tfrac12q^{-k}
 \qquad\text{if }\dist(x,y)=k.
\]
The displayed bound also holds for $k=0$.

Write $f(v)=|S\cap B(v,\ell-1)|$. Summing this bound over
the vertices of $S$ at distance at most $\ell-1$ from $v$ gives
\[
 f\le 2\sum_{k=0}^{\ell-1}q^k
             \sum_{t=k}^{Kk}P_{\mathrm L}^t\mathbf1_S.
\]
Next, as in \cite{ADTT}, decompose $\mathbf1_S=(|S|/n)\mathbf1+g$, where
$\sum_xg(x)=0$ and $\|g\|_2\le\sqrt{|S|}$.
The constant part and the spectral estimate above give
\[
 \begin{aligned}
 \|f\|_2
 &\le C\ell q^\ell\frac{|S|}{\sqrt n}
       +\frac2{1-\rho}\sum_{k=0}^{\ell-1}(q\rho)^k\sqrt{|S|}\\
 &\le C\ell\max\{\sqrt q,q\rho\}^\ell\sqrt{|S|}.
 \end{aligned}
\]
The last inequality uses $|S|/n\le(3q/(4\alpha))q^{-\ell}$.
Choose $a$ with $\max\{\sqrt q,q\rho,q-1\}<a<q$.
The factor $\ell$ is absorbed by this larger exponential base,
so $\|f\|_2\le C a^\ell\sqrt{|S|}$.
All constants depend only on $d,\gamma,\alpha$.

For $e=\{u,w\}$, we have $b_e\le f(u)+f(w)$, so
\[
 \sum_e b_e^2\le2d\|f\|_2^2\le C a^{2\ell}|S|.
\]
Take $\varepsilon=q-a$, $L\ge\max\{1,\sqrt C\}$, and
$T=\{e:b_e\ge La^\ell\}$.
Writing $\dist(v,e)$ for the minimum distance from $v$ to an
endpoint of $e$, we obtain
\[
 \sum_{v\in S}|\{e\in T:\dist(v,e)\le\ell-1\}|
 =\sum_{e\in T}b_e
 \le\frac{\sum_e b_e^2}{La^\ell}
 \le La^\ell|S|.
\]
Some $v\in S$ therefore lies within distance $\ell-1$ of at
most $L(q-\varepsilon)^\ell$ edges of $T$, which is property~(B).
\end{proof}

\end{document}